\documentclass[12pt]{article}
\usepackage{amsmath, amscd, amssymb, latexsym, epsfig, color, amsthm, authblk, subfig, caption, mathtools,hyperref,appendix}
\setlength{\textwidth}{6.5in}
\setlength{\textheight}{8.6in}
\setlength{\topmargin}{0pt}
\setlength{\headsep}{0pt}
\setlength{\headheight}{0pt}
\setlength{\oddsidemargin}{0pt}
\setlength{\evensidemargin}{0pt}
\flushbottom
\pagestyle{plain}

\numberwithin{equation}{section}

\newtheorem{theorem}{Theorem}[section]
\newtheorem{proposition}[theorem]{Proposition}

\newtheorem{corollary}[theorem]{Corollary}

\newtheorem{lemma}[theorem]{Lemma}
\newtheorem{property}[theorem]{Property}
\newtheorem{problem}[theorem]{Problem}

\theoremstyle{definition}
\newtheorem{definition}[theorem]{Definition}
\newtheorem{example}[theorem]{Example}
\newtheorem{remark}[theorem]{Remark}

\DeclareMathOperator{\skel}{Skel}
\DeclareMathOperator{\lk}{\mathrm{lk}}
\DeclareMathOperator{\Aut}{\mathrm{Aut}}
\DeclareMathOperator{\st}{\mathrm{st}}
\DeclareMathOperator{\Sp}{\mathbb{S}}

\DeclareMathOperator{\conv}{\mathrm{conv}}

\newcommand{\R}{{\mathbb R}}

\newcommand{\Z}{{\mathbb Z}}

\begin{document}

\title{Centrally symmetric and balanced triangulations of $\Sp^2\times \Sp^{d-3}$ with few vertices}
\author[1]{Alexander Wang \thanks{alxwang@umich.edu}}
\author[2]{Hailun Zheng\thanks{hailunz@umich.edu}}
\affil[1,2]{University of Michigan}

\maketitle

\begin{abstract}
    A small triangulation of the sphere product can be found in lower dimensions by computer search and is known in few other cases: Klee and Novik constructed a centrally symmetric triangulation of $\Sp^i\times \Sp^{d-i-1}$ with $2d+2$ vertices for all $d\geq 3$ and $1\leq i\leq d-2$; they also proposed a balanced triangulation of $\Sp^1\times \Sp^{d-2}$ with $3d$ or $3d+2$ vertices. In this paper, we provide another centrally symmetric $(2d+2)$-vertex triangulation of $\Sp^2\times \Sp^{d-3}$. We also construct the first balanced triangulation of $\Sp^2\times \Sp^{d-3}$ with $4d$ vertices, using a sphere decomposition inspired by handle theory.
\end{abstract}

\section{Introduction}
%motivating questions
The minimal triangulations of manifolds form an important research object in combinatorial and computational topology. What is the minimal number of vertices required to triangulate a given manifold? How do we construct a vertex-minimal triangulation and is this triangulation unique?

%non-balanced case: overall
In this paper, we focus on the triangulation of sphere products. From a result of Brehm and K\"uhnel \cite{Brehm-Kuhnel}, it is known that a combinatorial triangulation of $\Sp^i\times\Sp^{d-i-1}$ has at least $2d-i+2$ vertices, for $1 \leq i \leq (d-1)/2$. In 1986, K\"uhnel \cite{Kuhnel} constructed a triangulation of $\Sp^1 \times \Sp^{d-2}$ with $2d+1$ vertices for odd $d$. Later, two groups of researchers, Bagchi and Datta \cite{Bagchi-Datta} as well as Chestnut, Sapir and Swartz \cite{Chestnut-Sapir-Swartz}, found in 2008 that K\"uhnel's construction is indeed the unique minimal triangulation for odd $d$. For even $d$, they showed that a minimal triangulation requires $2d+2$ vertices and is not unique.

%non-balanced case: construction
Minimal triangulations of other sphere products are less well-understood. The best general result is from \cite{Klee-Novik-cs manifolds}, where a centrally symmetric triangulation of $\Sp^i\times \Sp^{d-i-1}$ with $2d+2$ vertices is constructed as a subcomplex of the boundary of the $(d+1)$-cross-polytope. In addition, a minimal triangulation of $\Sp^2\times \Sp^{d-3}$ for $d\leq 6$ as well as a minimal triangulation of $\Sp^3\times \Sp^3$ are found by the computer program BISTELLAR \cite{Lutz}. In this paper, we give an alternative centrally symmetric $(2d+2)$-vertex triangulation of $\Sp^2\times \Sp^{d-3}$ for all $d\geq 5$. The construction is based on finding two shellable $(d-1)$-balls in a $(d-1)$-sphere whose intersection triangulates $\Sp^1\times \mathbb{D}^{d-2}$, where $\mathbb{D}^{d-2}$ is the $(d-2)$-dimensional disk. We also describe an inductive method to construct triangulations of other sphere products in higher dimensions, see Section 3.2.

%balanced case: overall
In recent years, balanced triangulated manifolds have caught much attention. A $(d-1)$-dimensional simplicial complex is \emph{balanced} provided that its graph is $d$-colorable. Many important classes of complexes arise as balanced complexes, such as barycentric subdivisions of regular CW complexes and Coxeter complexes. As taking barycentric subdivisions of a complex would generate a lot of new vertices, one would ask if there is a more efficient way to construct the balanced triangulated manifold from a non-balanced one.

%balanced case: construction
In much of the same spirit as K\"uhnel's construction, Klee and Novik \cite{Klee-Novik-balanced LBT} provided a balanced triangulation of $\Sp^1\times \Sp^{d-2}$ with $3d$ vertices for odd $d$ and with $3d+2$ vertices otherwise. Furthermore, Zheng \cite{Zheng} showed that the number of vertices for a minimal triangulation is indeed $3d$ for odd $d$ and $3d+2$ otherwise. However, as of yet, no balanced triangulations of $\Sp^i\times \Sp^{d-i-1}$ for $2\leq i\leq d-3$ exist in literature. In this paper, we construct the first balanced triangulation of $\Sp^2\times \Sp^{d-3}$ with $4d$ vertices. The construction uses a sphere decomposition inspired by handle theory. Recently, Izmestiev, Klee, and Novik \cite{Izmestiev-Klee-Novik} proved that any two balanced PL homeomorphic closed combinatorial manifolds can be connected using a sequence of cross-flips. In particular, given a balanced triangulated manifold, this allows us to computationally search for a minimal balanced triangulation. However, the computation complexity grows very fast as the dimension and the number of vertices increase, see \cite[Theorem 2.4]{Kubitzke-Venturello}. So far, we have been unable to find a smaller balanced triangulation of $\Sp^2\times \Sp^2$ from our construction.

% The structure of the manuscript
The paper is structured as follows. In Section 2, we review the basics of simplicial complexes, balanced triangulations, and other relevant definitions. In Section 3, we present our centrally symmetric $(2d+2)$-vertex triangulation of $\Sp^2\times \Sp^{d-3}$. In Section 4, the balanced triangulation of $\Sp^2\times \Sp^{d-3}$ with $4d$ vertices is constructed, followed by a discussion of its properties.

\section{Preliminaries}
A \textit{simplicial complex} $\Delta$ with vertex set $V (= V(\Delta)) $ is a collection of subsets
$\sigma\subseteq V$, called \textit{faces}, that is closed under inclusion, such that for every $v \in V$, $\{v\} \in \Delta$. For $\sigma\in \Delta$, let $\dim\sigma:=|\sigma|-1$ and define the \textit{dimension} of $\Delta$, $\dim \Delta$, as the maximum dimension of the faces of $\Delta$.  A face $\sigma \in \Delta$ is said to be a \textit{facet} provided that it is a face which is maximal with respect to inclusion.  We say that a simplicial complex $\Delta$ is \textit{pure} if all of its facets have the same dimension. If $\Delta$ is $(d-1)$-dimensional and $-1 \leq i \leq d-1$, then the \textit{$f$-number} $f_i = f_i(\Delta)$ denotes the number of $i$-dimensional faces of $\Delta$. The \textit{star} and \textit{link} of a face $\sigma$ in $\Delta$ is defined as follows:

\[
\st(\sigma, \Delta):= \{\tau \in\Delta: \sigma\cup\tau\in\Delta \}, \quad \lk(\sigma,\Delta):=\{\tau \in \st(\sigma,\Delta): \tau \cap \sigma = \emptyset\}.
\]
When the context is clear, we may simply denote the star and link of $\sigma$ as $\st(\sigma)$ and $\lk(\sigma)$, respectively. The \emph{cone} over the simplicial complex $\Delta$ with apex $v$ is denoted as $\Delta *\{v\}$. We also define the \textit{restriction} of $\Delta$ to a vertex set $W$ as $\Delta[W]:=\{\sigma\in \Delta:\sigma\subseteq W\}$. A subcomplex $\Omega \subset \Delta$ is said to be \textit{induced} provided that $\Omega = \Delta[U]$ for some $U \subset V(\Delta)$. If $\Delta$ and $\Gamma$ are pure simplicial complexes of the same dimension, the \emph{complement} of $\Gamma$ in $\Delta$, denoted as $\overline{\Delta\backslash \Gamma}$, is the subcomplex of $\Delta$ generated by all facets of $\Delta$ not in $\Gamma$. The \textit{$i$-skeleton} of a simplicial complex $\Delta$ is the subcomplex containing all faces of $\Delta$ which have dimension at most $i$.  In particular, the $1$-skeleton of $\Delta$ is the graph of $\Delta$.

Denote by $\sigma^d$ the $d$-simplex. A \emph{combinatorial $(d-1)$-sphere} (respectively, a \emph{combinatorial $(d-1)$-ball}) is a simplicial complex PL homeomorphic to $\partial \sigma^d$ (respectively, $\sigma^{d-1}$).  A closed \emph{combinatorial $(d-1)$-manifold} $\Delta$ (with or without boundary) is a connected simplicial complex such that the link of every non-empty face $F$ is a $(d-|F|-1)$-dimensional combinatorial ball or sphere; in the former case we say $F$ is a \emph{boundary face}. The \emph{boundary complex} of $\Delta$, denoted by $\partial \Delta$, is the subcomplex of $\Delta$ that consists of all boundary faces of $\Delta$. The combinatorial manifolds have the following nice property, see \cite[Theorems 12.6 and 14.4]{Alexander}.
\begin{lemma}\label{lm: union of comb manifolds}
	Let $\Delta_1$ and $\Delta_2$ be combinatorial $(d-1)$-manifolds with boundary. Then
	\begin{itemize}
	\item The boundary complexes $\partial\Delta_1$ and $\partial \Delta_2$ are combinatorial $(d-2)$-manifolds.
	\item If $\Delta_1\cap \Delta_2=\partial\Delta_1\cap \partial \Delta_2$ is a combinatorial $(d-2)$-manifold, then $\Delta_1\cup \Delta_2$ is a combinatorial $(d-1)$-manifold. If furthermore $\Delta_2$ and $\Delta_1\cap \Delta_2$ are combinatorial balls, then $\Delta_1\cup \Delta_2$ and $\Delta_1$ are PL homeomorphic manifolds.
	\end{itemize} 
\end{lemma}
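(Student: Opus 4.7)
The plan is to prove each claim separately by reducing everything to the behavior of links under boundary and gluing operations.

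For the first statement, I would take an arbitrary face $F \in \partial\Delta_i$ (where $i \in \{1,2\}$) and show that $\lk(F, \partial\Delta_i)$ is a combinatorial sphere of the correct dimension. The workhorse identity is $\lk(F,\partial \Delta_i)=\partial\lk(F,\Delta_i)$, which holds because a coface $\tau$ of $F$ in $\Delta_i$ lies in $\partial\Delta_i$ if and only if its complementary face in $\lk(F,\Delta_i)$ lies in $\partial\lk(F,\Delta_i)$; this in turn follows from the fact that the link of a face in $\lk(F,\Delta_i)$ is canonically identified with a link in $\Delta_i$. Since $F$ is a boundary face, $\lk(F,\Delta_i)$ is a combinatorial $(d-|F|-1)$-ball, so its boundary is a combinatorial $(d-|F|-2)$-sphere. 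Hence every link in $\partial\Delta_i$ is a sphere of the correct dimension, proving $\partial\Delta_i$ is a closed combinatorial $(d-2)$-manifold.

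For the first half of the second statement, I would verify the link condition at each face $F \in \Delta_1 \cup \Delta_2$ by cases. If $F \notin \Delta_1 \cap \Delta_2$, say $F \in \Delta_1 \setminus \Delta_2$, then $\lk(F,\Delta_1\cup\Delta_2)=\lk(F,\Delta_1)$, which is a ball or sphere by hypothesis. If $F \in \Delta_1 \cap \Delta_2$, then by assumption $F \in \partial\Delta_1\cap\partial\Delta_2$, so $\lk(F,\Delta_1)$ and $\lk(F,\Delta_2)$ are both combinatorial $(d-|F|-1)$-balls, and
\[
\lk(F,\Delta_1)\cap\lk(F,\Delta_2)=\lk(F,\Delta_1\cap\Delta_2)=\partial\lk(F,\Delta_1)\cap\partial\lk(F,\Delta_2),
\]
a combinatorial $(d-|F|-2)$-manifold by the hypothesis together with part 1 applied to $\lk(F,\Delta_i)$. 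Then $\lk(F,\Delta_1\cup\Delta_2)=\lk(F,\Delta_1)\cup\lk(F,\Delta_2)$ is the union of two PL balls along a common submanifold of their boundaries, which by the classical PL Newman-type gluing theorem is a combinatorial ball or sphere of the right dimension (depending on whether the two boundary pieces cover all of $\partial\lk(F,\Delta_1)$ or only part of it). This is the step I expect to be the main obstacle, since it requires a careful appeal to PL topology rather than a direct combinatorial argument.

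For the PL homeomorphism assertion, the strategy is to exhibit $\Delta_1\cup\Delta_2$ as the result of attaching the PL ball $\Delta_2$ to the PL manifold $\Delta_1$ along the PL ball $\Delta_1\cap\Delta_2$ sitting inside $\partial\Delta_1\cap\partial\Delta_2$. By the collar neighborhood theorem in the PL category, a collar of $\Delta_1\cap\Delta_2$ in $\Delta_2$ can be identified with a collar in $\Delta_1$ near $\Delta_1\cap\Delta_2\subset\partial\Delta_1$, so a suitable PL isotopy pushes $\Delta_2$ into such a collar and exhibits the inclusion $\Delta_1\hookrightarrow\Delta_1\cup\Delta_2$ as a PL homeomorphism. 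All of these facts are the standard PL ingredients recorded in \cite[Theorems 12.6 and 14.4]{Alexander}, and I would state them explicitly and cite them rather than reprove them here.
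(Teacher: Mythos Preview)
The paper does not give its own proof of this lemma: it is stated as a quotable fact with the pointer ``see \cite[Theorems~12.6 and 14.4]{Alexander}'' and nothing more. Your proposal therefore already goes well beyond what the paper does, and your final sentence---that the PL ingredients are precisely those recorded in Alexander and should be cited rather than reproved---is exactly the paper's stance.

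Your sketch is essentially correct. One small sharpening: in the case $F\in\Delta_1\cap\Delta_2$, you describe $\lk(F,\Delta_1\cap\Delta_2)$ only as ``a combinatorial $(d-|F|-2)$-manifold,'' but in fact it is a combinatorial \emph{ball or sphere}, since it is the link of $F$ in the combinatorial $(d-2)$-manifold $\Delta_1\cap\Delta_2$. This matters, because the Newman-type gluing you invoke (two PL balls glued along a common piece of their boundaries yielding a ball or sphere) requires the gluing locus itself to be a ball (giving a ball) or the full common boundary sphere (giving a sphere); an arbitrary submanifold would not suffice. With that observation the inductive structure becomes clean: when the intersection of the links is a ball, the lower-dimensional instance of the very homeomorphism clause you are proving shows the union of the two link-balls is again a ball; when it is the full boundary sphere, the union is a sphere. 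Otherwise your link identities, case split, and collar argument are the standard route and match what Alexander's theorems encode.
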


For each simplicial complex $\Delta$ there is an associated topological space $\|\Delta\|$. We say $\Delta$ is \emph{simply connected} (resp. \emph{path connected}) if $\|\Delta\|$ is simply connected (resp. path connected). A simplicial complex $\Delta$ is called a \textit{simplicial manifold} if $\|\Delta\|$ is homeomorphic to a manifold. The boundary complex of a simplicial $d$-ball is a simplicial $(d-1)$-sphere. It is well-known that while simplicial manifolds of dimension $\leq 3$ are combinatorial, in general a simplicial manifold needs not be combinatorial. 

In the following sections we will study simplicial complexes with additional nice structures. A $(d-1)$-dimensional simplicial complex $\Delta$ is called \textit{balanced} if the graph of $\Delta$ is $d$-colorable; that is, there exists a coloring map $k: V\to \{1,2,\cdots,d\}$ such that $k(x) \neq k(y)$ for all edges $\{x,y\}\in\Delta$. A simplicial complex is \emph{centrally symmetric} or \emph{cs} if it is endowed with a free involution $\alpha: V(\Delta)\to V(\Delta)$ that induces a free involution on the set of all non-empty faces. We say $F$ and $-F:=\alpha(F)$ are \emph{antipodal faces}. In general, if $\Gamma$ and $-\Gamma:=\alpha(\Gamma)$ are subcomplexes of $\Delta$, then we say that they form \emph{antipodal complexes} in $\Delta$.

Let $C_d^*:=\conv\{\pm e_1, \dots, \pm e_d\}$ be the $d$-cross-polytope, where $e_1, \dots, e_d$ form the standard basis in $\R^d$. One important class of cs complexes is given by the boundary complex of $C_d^*$. In fact $\partial C_d^*$ is the cs and balanced vertex-minimal triangulation of the $(d-1)$-sphere. In Section 3, we will label the vertex set of $\partial C^*_d$ as $\{x_1,\dots, x_d,y_1,\dots, y_d\}$ such that $x_i, y_i$ form a pair of antipodal vertices for all $i$. Every facet of $\partial C^*_d$ can be written in the form $\{u_1,u_2,\dots ,u_d\}$, where each $u_i \in \{x_i,y_i\}$. We say a facet has a \emph{switch} at position $i$ if $u_i$ and $u_{i+1}$ have different $xy$ labels. 

Let $B(i,d)$ be the pure subcomplex of $\partial C^*_d$ generated by all facets with at most $i$ switches. For example, $B(0,d)$ consists of two disjoint facets $\{x_1,\dots, x_d\}$ and $\{y_1,\dots, y_d\}$. Denote by $\mathcal{D}_d$ the dihedral group of order $2d$. The following lemma is essentially Theorem 1.2 in \cite{Klee-Novik-cs manifolds}.
\begin{lemma}\label{lemma: B(i,d) properties}
	For $0\leq i<d-1$, the complex $B(i,d)$ satisfies the following properties:
	\begin{enumerate}
		\item $B(i,d)$ contains the entire $i$-skeleton of $\partial C_d^*$ as a subcomplex.
		\item The boundary of $B(i,d)$ triangulates $\Sp^i\times \Sp^{d-i-2}$.
		\item $B(i,d)$ is a balanced cs combinatorial manifold whose integral (co)homology groups coincide with those of $\Sp^i$. Also, $\|B(0,d)\|\cong \mathbb{D}^{d-1}\times \Sp^0$ and $\|B(1,d)\|\cong \mathbb{D}^{d-2}\times \Sp^1$.
		\item The complement of $B(i,d)$ in $\partial C^*_d$ is simplicially isomorphic to $B(d-i-2,d)$.
		\item B(i,d) admits a vertex-transitive action of $\mathbb{Z}_2\times \mathcal{D}_d$ if $i$ is even and of $\mathcal{D}_{2d}$ if $i$ is odd.
	\end{enumerate}
\end{lemma}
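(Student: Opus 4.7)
Since the lemma paraphrases \cite[Theorem 1.2]{Klee-Novik-cs manifolds}, my plan is to isolate the key combinatorial observations and indicate how the topological and symmetry claims then follow from them. I would first dispatch the purely combinatorial items (1) and (4). For (1), given an $i$-face $F=\{u_{j_1},\dots,u_{j_{i+1}}\}$ with $j_1<\cdots<j_{i+1}$, I extend $F$ to a facet by filling every gap between consecutive $j_k$'s (and the two ends) using the $xy$-label of the nearest element of $F$ on the left. The number of switches in the resulting facet equals the number of label-changes in the subsequence $u_{j_1},\dots,u_{j_{i+1}}$, which is at most $i$. For (4), I define $\phi\colon \partial C^*_d\to \partial C^*_d$ by $\phi(x_j)=y_j$ and $\phi(y_j)=x_j$ for all odd $j$, and by fixing every vertex at an even position. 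Since each adjacent pair of positions $(j,j+1)$ contains exactly one odd index, $\phi$ toggles the switch status at every position, sending a facet with $k$ switches to one with $d-1-k$ switches. Hence $\phi$ restricts to a simplicial isomorphism $\overline{\partial C^*_d\setminus B(i,d)}\cong B(d-i-2,d)$.

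For (2), once (3) is in hand (so that both $B(i,d)$ and $B(d-i-2,d)$ are combinatorial manifolds with boundary), Lemma \ref{lm: union of comb manifolds} applied to the decomposition $\partial C^*_d=B(i,d)\cup \phi(B(d-i-2,d))$ identifies $\partial B(i,d)$ as their common boundary, and an inductive identification of this boundary with the join structure present in $\partial C^*_d$ yields the PL type $\Sp^i\times \Sp^{d-i-2}$. For (3), the coloring $\{x_j,y_j\}\mapsto j$ is a proper $d$-coloring, making $B(i,d)$ balanced, and the antipodal involution $x_j\leftrightarrow y_j$ preserves switch counts, exhibiting $B(i,d)$ as a cs complex. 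To verify the manifold property and compute the integral homology, I would induct on $i$: the base case $B(0,d)=\partial\sigma^{d-1}\sqcup\partial\sigma^{d-1}$ is literally $\mathbb{D}^{d-1}\times \Sp^0$, and $B(i+1,d)$ is obtained from $B(i,d)$ by attaching the facets with exactly $i+1$ switches, each of which can be viewed as a handle whose link is a standard combinatorial ball; tracking Euler characteristic and fundamental group through these attachments yields $H_*(B(i,d))\cong H_*(\Sp^i)$. The special case $B(1,d)\cong \mathbb{D}^{d-2}\times \Sp^1$ can be read off by matching the facets with $0$ or $1$ switches to the standard triangulation of the solid torus.

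For (5), I would exhibit generators explicitly. The reversal $\rho\colon u_j\mapsto u_{d+1-j}$ and the antipodal map $\alpha\colon x_j\leftrightarrow y_j$ both preserve switch counts, and together with the cyclic permutation of positions they generate the relevant dihedral action on the vertex set; when $i$ is even, the parity constraint from $\phi$ decouples the two factors and one obtains $\mathbb{Z}_2\times \mathcal{D}_d$, while when $i$ is odd, the automorphism $\phi$ of (4) combines with these to give the larger group $\mathcal{D}_{2d}$. Vertex-transitivity is checked by noting that any two vertices are connected by some composition of a position permutation with an antipodal flip. The main obstacle is item (3): establishing that $B(i,d)$ is a combinatorial manifold and pinning down its integral homology in the general case. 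This is the step where I would rely most heavily on the inductive handle-attachment argument already carried out in \cite{Klee-Novik-cs manifolds}.
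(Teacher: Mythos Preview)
The paper does not prove this lemma; it is stated as a quotation of \cite[Theorem~1.2]{Klee-Novik-cs manifolds} with no argument given. So there is nothing in the paper to compare your proposal against beyond the citation itself.

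That said, your outline has two concrete errors worth flagging. In (3) you write $B(0,d)=\partial\sigma^{d-1}\sqcup\partial\sigma^{d-1}$; this is wrong, since $B(0,d)$ consists of the two \emph{full} $(d-1)$-simplices $\{x_1,\dots,x_d\}$ and $\{y_1,\dots,y_d\}$, not their boundaries---that is precisely why $\|B(0,d)\|\cong\mathbb{D}^{d-1}\times\Sp^0$. More seriously, your sketch for (5) does not work as stated: the plain cyclic permutation of positions does \emph{not} preserve switch counts. For instance, with $d=4$ the facet $x_1x_2y_3y_4$ has one switch, but after the shift $u_j\mapsto u_{j+1}$ one obtains $y_1x_2x_3y_4$, which has two. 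The correct generator in \cite{Klee-Novik-cs manifolds} is a \emph{twisted} rotation that flips the $xy$-label at the wrap-around position (compare the map $f$ used later in Example~\ref{main construction}, which sends $x_d\mapsto y_1'$ rather than $x_d\mapsto x_1'$). Whether the resulting group is $\Z_2\times\mathcal{D}_d$ or $\mathcal{D}_{2d}$ then depends on how this twist interacts with the reflection and the antipodal map, which your sketch does not address. Your arguments for (1) and (4) are correct and match the combinatorics in the original source.
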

Finally, we define shellability.
\begin{definition}
	Let $\Delta$ be a pure $d$-dimensional simplicial complex. A \emph{shelling} of $\Delta$ is a linear ordering of the facets $F_1,F_2,\dots, F_s$ such that $F_k\cap (\cup_{i=1}^{k-1} F_i)$ is a pure $(d-1)$-dimensional complex for all $2\leq k\leq s$, and $\Delta$ is called \emph{shellable} if it has a shelling.
\end{definition}
Equivalently, $\Delta$ is called shellable if for each $j\geq 0$ there exists a face $r(F_j )\subseteq F_j$ such that $F_j\backslash \cup_{i<j}F_i=[r(F_j), F_j]$, where $[r(F_j), F_j]=\{G: r(F_i)\subseteq G\subseteq F_j\}$. The face $r(F_j)$ is called the \emph{restriction} of $F_j$. Note that not every combinatorial ball and combinatorial sphere are shellable, see, for example, \cite{Hachimori-Ziegler}.

\section{The cs triangulations of the sphere products}
It is known that for $i\leq j$, a minimal triangulation of $\Sp^i\times \Sp^j$ requires at least $i+2j+4$ vertices \cite{Brehm-Kuhnel}. Such triangulations are constructed by Lutz in lower dimensional cases but not known in general. 
In this section, we aim at finding another triangulation of $\Sp^2\times \Sp^{d-3}$ with $2d+2$ vertices for $d\geq 5$. The following theorem is Theorem 7 in \cite{Kreck}. Throughout, all the homology groups in the paper are computed with coefficients in $\mathbb{Z}$. We write $\tilde{H}_*(\Delta)$ to denote the reduced homology of $\Delta$ with coefficients in $\mathbb{Z}$, and $\beta_i(\Delta)$ denotes the rank of $\tilde{H}_i(\Delta)$. 

\begin{theorem}\label{thm: homology of product of spheres}
	Let $M$ be a simply connected codimension-1 submanifold of $\Sp^{d}$, where
	$d \geq 5$. If $M$ has the integral homology of $\Sp^i \times \Sp^{d-i-1}$ and $1 < i \leq \frac{d-1}{2}$, then $M$ is homeomorphic
	to $\Sp^i \times \Sp^{d-i-1}$.
\end{theorem}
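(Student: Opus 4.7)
\medskip\noindent\textbf{Proof proposal.} The plan is to slice $\Sp^d$ along $M$, pin down the two resulting pieces from their homology, and then recognise each piece up to homeomorphism as a disk bundle over a sphere whose boundary is the claimed product.

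First, because $M$ is a closed codimension-one submanifold of the sphere, Alexander duality gives $\tilde H_0(\Sp^d\setminus M)\cong \tilde H^{d-1}(M)\cong \Z$, so $M$ separates $\Sp^d$ into two compact codimension-zero submanifolds $W_1,W_2$ with $W_1\cap W_2=\partial W_1=\partial W_2=M$. I would then combine the Mayer--Vietoris sequence for $\Sp^d=W_1\cup W_2$ with Lefschetz duality $H_k(W_j)\cong H^{d-k}(W_j,M)$ and the long exact sequence of the pair $(W_j,M)$. A diagram chase, using that $\tilde H_*(\Sp^d)$ vanishes in the middle range and that $H_*(M)$ has generators only in degrees $0,i,d-i-1,d-1$, forces (after possibly swapping labels) that $W_1$ has the integral homology of $\Sp^i$ and $W_2$ of $\Sp^{d-i-1}$. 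Van Kampen applied to the open cover $\{W_1,W_2\}$ of $\Sp^d$, together with the simple connectedness of $M$ and of $\Sp^d$, shows both $W_j$ are simply connected; by Whitehead's theorem $W_j\simeq \Sp^{i_j}$, where $(i_1,i_2)=(i,d-i-1)$.

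Next, I would upgrade the homotopy equivalence $\Sp^{i_j}\simeq W_j$ to a smooth embedding $\Sp^{i_j}\hookrightarrow \mathrm{int}\,W_j$ of its \emph{core} sphere, which is possible by general position since $2i_j\le d-1<d$ (a consequence of $i\le (d-1)/2$). A tubular neighbourhood of this core is a rank-$(d-i_j)$ disk bundle whose sphere bundle must be identifiable with $\partial W_j=M$; since $i>1$ this bundle lies in a stable range in which the homological and fundamental-group data force it to be trivial, so the tubular neighbourhood is diffeomorphic to $\Sp^{i_j}\times \mathbb{D}^{d-i_j}$. The complement of the open neighbourhood in $W_j$ is a compact simply connected manifold with trivial homology and boundary $\Sp^{d-1}$; by the $h$-cobordism theorem (where the hypothesis $d\ge 5$ is used) it is a disk, so $W_j\cong \Sp^{i_j}\times \mathbb{D}^{d-i_j}$. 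Gluing the two pieces along $M$ yields $M\cong \partial(\Sp^i\times \mathbb{D}^{d-i})\cong \Sp^i\times \Sp^{d-i-1}$.

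The main obstacle is the second step: ruling out a nontrivial disk-bundle filling whose sphere bundle happens to look like $\Sp^i\times \Sp^{d-i-1}$. This is where Kreck's modified surgery theory enters, and where each hypothesis pays for itself: $d\ge 5$ is needed for the $h$-cobordism theorem, $i\le (d-1)/2$ for the general-position embedding and the stable range of the normal bundle, and $i>1$ to eliminate the $\pi_1$-pathology that already obstructs such a classification in the $i=1$ case (witness the existence of several distinct vertex-minimal triangulations of $\Sp^1\times \Sp^{d-2}$ cited in the introduction).
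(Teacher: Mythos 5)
The paper does not prove this statement at all---it is quoted verbatim as Theorem~7 of Kreck's \emph{An inverse to the Poincar\'e conjecture} and simply cited. So your proposal is not ``comparable to the paper's proof'' in the usual sense; it is an attempt to reconstruct Kreck's theorem from scratch, which is a much larger undertaking than what the paper does.

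On the mathematics of your sketch: the first stage (Alexander duality to separate $\Sp^d$ into $W_1,W_2$, Mayer--Vietoris plus Lefschetz duality to pin down $H_*(W_j)$, van Kampen for simple connectivity, Whitehead to conclude $W_j\simeq \Sp^{i_j}$) is sound and is a natural opening. The second stage is where the argument breaks. After embedding a core $\Sp^{i_j}\hookrightarrow \mathrm{int}\,W_j$ and removing an open tubular neighbourhood $\nu$, the complement $W_j\setminus \mathrm{int}\,\nu$ has \emph{two} boundary components, $M=\partial W_j$ and the sphere bundle $S(\nu)$; it is not a compact manifold with boundary $\Sp^{d-1}$, and the $h$-cobordism theorem does not make it a disk. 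What you would actually want is that $W_j\setminus\mathrm{int}\,\nu$ is an $h$-cobordism from $M$ to $S(\nu)$; even granting that (which requires checking that both inclusions are homotopy equivalences), the cobordism has dimension $d$, so invoking the $h$-cobordism theorem requires $d\ge 6$, not $d\ge 5$ as claimed---the borderline case is exactly one of the places where Kreck's modified surgery is doing real work. Finally, you flag the crux yourself: you need the normal bundle of the core sphere to be trivial. The claim that ``the stable range forces it to be trivial'' is not justified; $\nu$ is classified by $\pi_{i_j-1}(O(d-i_j))$, which is often nonzero in the given range (e.g., $i_j=2$ gives $\pi_1(O(d-2))=\Z/2$ for $d\ge 5$), and the difficulty is precisely to choose the right embedding so that the normal bundle is trivial, or to rule out exotic fillings of $M$ by nontrivial disk bundles. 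Deferring to ``Kreck's modified surgery'' for this is correct in spirit but leaves the hard part unproved, which---combined with the boundary and dimension errors---means the sketch is not a proof. Given that the paper cites the result, the right move is simply to cite Kreck rather than attempt to rederive it.
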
 

\begin{proposition}\label{prop: criteria}
	Fix $d\geq 5$ and $1\leq i\leq \frac{d-1}{2}$. Let $D_1$ and $D_2$ be two combinatorial $d$-balls such that
	\begin{enumerate}
	    \item $\partial (D_1\cup D_2)$ is combinatorial $(d-1)$-manifold contained in a combinatorial $d$-sphere.
	    \item $D_1\cap D_2=\partial D_1\cap \partial D_2$ is a path-connected combinatorial $(d-1)$-manifold (with boundary) that has the same homology as $\Sp^{i-1}$.
	    \item $\partial (D_1\cap D_2)$ is a combinatorial $(d-2)$-manifold having the same homology as $
\Sp^{i-1}\times \Sp^{d-i-1}$.
	\end{enumerate}
	 Then $\partial(D_1\cup D_2)$ is combinatorial triangulation of $\Sp^i\times \Sp^{d-i-1}$.
\end{proposition}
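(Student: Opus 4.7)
The plan is to verify the three hypotheses of Theorem~\ref{thm: homology of product of spheres} (Kreck) for $M := \partial(D_1 \cup D_2)$, which will directly yield the conclusion. Condition~(1) of the proposition already provides that $M$ is a combinatorial $(d-1)$-manifold embedded as a codimension-one submanifold of a combinatorial $d$-sphere, so what remains is to check that $M$ has the integral homology of $\Sp^i \times \Sp^{d-i-1}$ and that $M$ is simply connected.

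For the homology calculation, I would introduce the four natural pieces
\[
A := \overline{\partial D_1 \setminus (D_1\cap D_2)}, \quad B := \overline{\partial D_2 \setminus (D_1\cap D_2)}, \quad C := D_1\cap D_2, \quad Y := \partial C.
\]
Condition~(2) places $C$ inside both $\partial D_1$ and $\partial D_2$, so $\partial D_1 = A\cup C$ and $\partial D_2 = B\cup C$ with $A\cap C = B\cap C = Y$, while $M = A\cup B$ with $A\cap B = Y$; each $\partial D_j$ is a combinatorial $(d-1)$-sphere. I would then apply Mayer--Vietoris first to $\partial D_1 = A\cup C$: the known homology of $\Sp^{d-1}$ together with the hypotheses $H_*(C)\cong H_*(\Sp^{i-1})$ and $H_*(Y)\cong H_*(\Sp^{i-1}\times \Sp^{d-i-1})$ determines $H_*(A)$, working out to $H_*(\Sp^{d-i-1})$ after tracking the connecting maps in degrees $0$ and $d-2$. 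By symmetry $H_*(B)$ agrees. A second Mayer--Vietoris applied to $M = A\cup B$ with $A\cap B = Y$, using the K\"unneth description of $H_*(Y)$ to identify the connecting maps, then assembles the pieces into $H_*(M)\cong H_*(\Sp^i\times \Sp^{d-i-1})$.

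Simple connectivity of $M$ is the main obstacle, since the given hypotheses are purely homological. The first step would be van Kampen applied to $M = A\cup B$: because $Y$ is path-connected (its $H_0$ is $\Z$ by condition~(3)), $\pi_1(M)\cong \pi_1(A)*_{\pi_1(Y)}\pi_1(B)$, reducing the problem to showing $\pi_1(A) = \pi_1(B) = 1$. For $\pi_1(A)$, apply van Kampen to $\partial D_1 = A\cup C$: since $\pi_1(\partial D_1) = \pi_1(\Sp^{d-1}) = 1$ for $d\geq 3$, the amalgamated product $\pi_1(A)*_{\pi_1(Y)}\pi_1(C)$ collapses to the trivial group. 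Although the collapse of a pushout does not automatically force the factors to be trivial, in our setting one can represent any loop $\gamma$ in $A\subseteq \partial D_1$ by a null-homotopy in $\Sp^{d-1}$ and, exploiting that $A$ is codimension-zero in $\partial D_1$ and $\dim Y = d-2$, use a general-position argument valid for $d\geq 5$ to push the null-homotopy off a spine of $C$ into $A$. The parallel argument applies to $B$. This step is where I expect the main technical difficulty to lie.

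With all three hypotheses of Theorem~\ref{thm: homology of product of spheres} verified, $\|M\|$ is homeomorphic to $\Sp^i \times \Sp^{d-i-1}$, and condition~(1) together with the combinatorial structure inherited from the ambient sphere upgrades this to a combinatorial triangulation of the sphere product.
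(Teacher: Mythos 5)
Your homology calculation follows essentially the same route as the paper: two applications of Mayer--Vietoris, first to $\partial D_1 = A\cup C$ (in the paper's notation, $\overline{\partial D_1\backslash\partial D_2}$ and $D_1\cap D_2$) to deduce that $A$ has the homology of $\Sp^{d-i-1}$, and then to $M = A\cup B$ to assemble $H_*(M)\cong H_*(\Sp^i\times\Sp^{d-i-1})$, treating the cases $i\ne d-i-1$ and $i=d-i-1$ separately. That part matches the paper.

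The simple-connectivity step is where you diverge, and where your argument has a genuine gap. You aim to prove $\pi_1(M)=1$ directly, via van Kampen on $M=A\cup B$ reduced to showing $\pi_1(A)=\pi_1(B)=1$, and your proposed route to $\pi_1(A)=1$ is to take a null-homotopy of a loop inside $\partial D_1\cong\Sp^{d-1}$ and push it off a spine of $C$ by general position. That step requires $C$ to have a spine of codimension at least $3$ in $\partial D_1$, i.e.\ of dimension at most $d-4$. But the hypotheses only say that $C$ has the \emph{homology} of $\Sp^{i-1}$; they say nothing about $\pi_1(C)$ (which need not be trivial---in the paper's application $C\cong \Sp^1\times\mathbb{D}^{d-2}$, with $\pi_1=\mathbb{Z}$) nor that $C$ collapses to a low-dimensional subcomplex. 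So the engulfing/general-position argument that you yourself flag as ``the main technical difficulty'' does not go through from the stated hypotheses; it is a real gap, not just a detail to be filled in.

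The paper sidesteps all of this by arguing about a different space: it establishes simple connectivity of the solid region $D_1\cup D_2$, not of its boundary $M$, via a one-line van Kampen. Since $D_1$ and $D_2$ are combinatorial $d$-balls, their interiors are simply connected open sets, and by hypothesis $D_1\cap D_2$ is path-connected; hence $D_1\cup D_2$ is simply connected. This is the simple-connectivity input the paper feeds into Theorem~\ref{thm: homology of product of spheres}, and it completely avoids the spine/general-position issue. You should look for this shorter route: it uses only that the $D_j$ are balls and that $D_1\cap D_2$ is path-connected, both of which are given.

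One smaller point: at the outset you appeal to condition~(1) to say $M$ is a combinatorial $(d-1)$-manifold, which is fine since it is given. The paper additionally notes (via Lemma~\ref{lm: union of comb manifolds}) that $D_1\cup D_2$ is itself a combinatorial $d$-manifold, which is needed both to make sense of its boundary and to run the Mayer--Vietoris and van Kampen arguments on the interiors of $D_1,D_2$. Worth recording explicitly.
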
   
\begin{proof}
	First note that $D_1\cup D_2$ is the union of two combinatorial $d$-balls that intersect along the combinatorial $(d-1)$-manifold $D_1\cap D_2=\partial D_1\cap \partial D_2$. Hence by Lemma \ref{lm: union of comb manifolds}, $D_1\cup D_2$ is a combinatorial $d$-manifold, and $\partial (D_1\cup D_2)$ is a combinatorial $(d-1)$-manifold.
	
    Since $D_1\cap D_2=\partial D_1 \cap \partial D_2$, we have that the intersection of $\overline{\partial D_1\backslash \partial D_2}$ and $D_1\cap D_2$ is exactly $\partial (D_1\cap D_2)$, while the union of $\overline{\partial D_1\backslash\partial D_2}$ and $D_1\cap D_2$ is $\partial D_1$. Applying the Mayer-Vietoris sequence on $(\overline{\partial D_1\backslash\partial D_2}, D_1\cap D_2,\partial D_1)$,  we obtain 
    \begin{equation}\label{sequence1}
    	\dots \to\tilde{H}_{j+1}(\partial D_1)\to \tilde{H}_j(\partial(D_1\cap D_2)) \xrightarrow{(\phi_1^j, f)} \tilde{H}_j(\overline{\partial D_1\backslash\partial D_2}) \oplus \tilde{H}_j(D_1\cap D_2) \to  \tilde{H}_j(\partial D_1) \to \cdots.
    \end{equation}
    Since $\partial D_1$, $D_1\cap D_2$ and $\partial (D_1\cap D_2)$ have the same homology as $\Sp^{d-1}$, $\Sp^{i-1}$ and $\Sp^{i-1}\times \Sp^{d-i-1}$ respectively, it follows that $\overline{\partial D_1\backslash\partial D_2}$ has the same homology as $\Sp^{d-i-1}$. 
    
    Note that the intersection \[\big(\overline{\partial D_1\backslash \partial D_2}\big)\cap\big(\overline{\partial D_2\backslash \partial D_1}\big)=\partial (\partial D_1\cap \partial D_2)=\partial (D_1\cap D_2),\]
    \[\mathrm{and} \quad \big(\overline{\partial D_1\backslash \partial D_2}\big)\cup\big(\overline{\partial D_2\backslash \partial D_1}\big)=\partial (D_1\cup D_2).\] We apply the Mayer-Vietoris sequence on the triple $(\overline{\partial D_1\backslash \partial D_2}, \overline{\partial D_2\backslash \partial D_1}, \partial(D_1\cup D_2))$. First assume that $d-i-1\neq i$. Since $\partial(D_1\cap D_2)$ has the same homology as $\Sp^{i-1}\times \Sp^{d-i-1}$,
    \begin{equation*}
    \begin{split}
     0\to \tilde{H}_{d-i}(\partial(D_1\cup D_2))\to\tilde{H}_{d-i-1}(\partial (D_1\cap D_2))&\xrightarrow{\phi^{d-i-1}} \tilde{H}_{d-i-1}(\overline{\partial D_1\backslash \partial D_2})\oplus \tilde{H}_{d-i-1}(\overline{\partial D_2\backslash \partial D_1})\\
    &\to \tilde{H}_{d-i-1}(\partial(D_1\cup D_2))\to 0,
    \end{split}   
    \end{equation*}
    Note that map $\phi^{d-i-1}$ is given by $a\mapsto (\phi_1^{d-i-1}(a),\phi^{d-i-1}_2(a))$, where
    $\tilde{H}_{d-i-1}(\partial (D_1\cap D_2))\xrightarrow{\phi_1^{d-i-1}} \tilde{H}_{d-i-1}(\overline{\partial D_1\backslash \partial D_2})$
    is the same map as in the sequence (\ref{sequence1}) above, and $\phi^{d-i-1}_2$ is a similar map $\tilde{H}_{d-i-1}(\partial (D_1\cap D_2))\xrightarrow{\phi_2^{d-i-1}} \tilde{H}_{d-i-1}(\overline{\partial D_2\backslash \partial D_1})$. Since $\phi_1^{d-i-1}, \phi_2^{d-i-1}$ are isomorphisms, it follows that $\tilde{H}_{d-i}(\partial (D_1\cup D_2))=0$ and $\tilde{H}_{d-i-1}(\partial (D_1\cup D_2))=\mathbb{Z}$. Also
    \[0\to \tilde{H}_{j}(\partial(D_1\cup D_2))\to \tilde{H}_{j-1} (\partial (D_1\cap D_2))\to 0 \quad \mathrm{for} \;\; j< d-i-1\;\; \mathrm{or}\;\; j>d-i.\]
    So $\partial (D_1\cup D_2)$ has the same homology as $\Sp^i\times \Sp^{d-i-1}$. The case for $d-i-1=i$ is similar; we have
    \[0\to \tilde{H}_{i+1}(\partial (D_1\cup D_2))\to \tilde{H}_{i}(\partial (D_1\cap D_2))\xrightarrow{(\phi_1^i, \phi_2^i)} \tilde{H}_{i}(\overline{\partial D_1\backslash \partial D_2})\oplus \tilde{H}_{i}(\overline{\partial D_2\backslash \partial D_1})\]
    \[ \to \tilde{H}_{i}(\partial(D_1\cup D_2))\to \tilde{H}_{i-1}(\partial (D_1\cap D_2))\to 0,\]
    As before, $\phi_1^i, \phi_2^i$ are isomorphisms (induced from injections from $\partial (D_1\cap D_2)$ to $\overline{\partial D_1\backslash \partial D_2}$ and $\overline{\partial D_2\backslash \partial D_1}$ respectively). So it reduces to 
    $$0\to \Z\xrightarrow{(\phi_1^i, \phi_2^i)} \Z\oplus \Z \to  \tilde{H}_{i}(\partial (D_1\cup D_2)) \to \Z\to 0.$$ Hence $\tilde{H}_{i}(\partial(D_1\cup D_2))=\Z\oplus\Z$. The same argument as above shows that $\partial(D_1\cup D_2)$ has trivial homology elsewhere.
    
    Finally, the complex $D_1\cup D_2$ is simply connected, since the union of two simply connected open subsets $\mathrm{int} D_1, \mathrm{int}D_2$ with path-connected intersection $D_1\cap D_2$ is simply connected. We conclude from Theorem \ref{thm: homology of product of spheres} that $\partial (D_1\cup D_2)$ triangulates $\Sp^i\times \Sp^{d-i-1}$.
\end{proof}

The above proposition provides us with a general method of constructing a triangulation of $\Sp^i\times \Sp^{d-i-1}$. In this paper we will mainly use the following variation of Proposition \ref{prop: criteria}.
\begin{corollary}\label{cor: inductive method}
	Let $d\geq 5$, $i\geq 2$ and let $B_1$ and $B_2$ be two combinatorial $(d-1)$-balls in a combinatorial $(d-1)$-sphere $\Gamma$ such that
	\begin{enumerate}
		\item $B_1\cap B_2$ is a path-connected combinatorial $(d-1)$-manifold having the same homology as $\Sp^{i-1}$.
		\item $\partial (B_1\cap B_2)$ is a combinatorial $(d-2)$-manifold having the same homology as $\Sp^{i-1}\times \Sp^{d-i-1}$.
	\end{enumerate}
	Let $D_1=B_1*\{u\}$ and $D_2=B_2*\{v\}$, where $u,v$ are the new vertices. Then $\Delta=\partial (D_1\cup D_2)$ triangulates $\Sp^i\times \Sp^{d-i-1}$. Furthermore, $\Delta$ is cs if $B_1$ and $B_2$ are antipodal.
\end{corollary}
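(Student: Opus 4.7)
The plan is to verify the three hypotheses of Proposition \ref{prop: criteria} for the pair $(D_1,D_2)$ and then invoke it directly, and to handle the centrally symmetric addendum separately. Since the cone over a combinatorial $(d-1)$-ball is a combinatorial $d$-ball (PL homeomorphic to $\sigma^{d-1}\ast\mathrm{pt}\cong\sigma^d$), both $D_1$ and $D_2$ are themselves combinatorial $d$-balls, so the ball data required by Proposition \ref{prop: criteria} is immediate.

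The first key step is to pin down the intersection structure. Using the standard identity $\partial(B\ast\{w\})=B\cup(\partial B\ast\{w\})$ for any ball $B$ with nonempty boundary, together with the fact that $u,v$ are distinct new vertices lying outside $\Gamma$, a short distributivity calculation should yield
\[
D_1\cap D_2\;=\;\partial D_1\cap\partial D_2\;=\;B_1\cap B_2,
\qquad
\partial(D_1\cap D_2)\;=\;\partial(B_1\cap B_2).
\]
Hypotheses (1) and (2) of the corollary therefore translate directly into hypotheses (2) and (3) of Proposition \ref{prop: criteria}. The second key step is to verify hypothesis (1) of Proposition \ref{prop: criteria}: I would embed $D_1\cup D_2$ into the suspension $\Gamma\ast\{u,v\}$, which is a combinatorial $d$-sphere since it is the join of a combinatorial $(d-1)$-sphere with $\Sp^0$. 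Because $D_1$ and $D_2$ are combinatorial $d$-balls meeting along the combinatorial $(d-1)$-manifold $B_1\cap B_2$ lying in both of their boundaries, Lemma \ref{lm: union of comb manifolds} gives that $D_1\cup D_2$ is a combinatorial $d$-manifold, so $\partial(D_1\cup D_2)$ is a combinatorial $(d-1)$-manifold sitting inside the $d$-sphere $\Gamma\ast\{u,v\}$. Proposition \ref{prop: criteria} then produces the desired triangulation $\Delta=\partial(D_1\cup D_2)$ of $\Sp^i\times\Sp^{d-i-1}$.

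For the centrally symmetric statement, I would let $\alpha$ be the free involution on $\Gamma$ exchanging $B_1$ and $B_2$, and extend it to $V(\Gamma)\cup\{u,v\}$ by $\alpha(u)=v$ and $\alpha(v)=u$. This extension is still free (since $u\neq v$), it swaps $D_1$ with $D_2$, and therefore preserves the subcomplex $\Delta$. The step I expect to require the most care is the intersection identity $D_1\cap D_2=\partial D_1\cap\partial D_2=B_1\cap B_2$: expanding $\partial D_1\cap\partial D_2$ produces four cross-terms that must be checked to collapse to $B_1\cap B_2$. However, because neither $u$ nor $v$ appears in the complex defining the other ball, this collapse is a routine distributivity computation, and I do not anticipate any serious obstacle beyond that bookkeeping.
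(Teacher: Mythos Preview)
Your proposal is correct and follows essentially the same approach as the paper: verify that $D_1,D_2$ are combinatorial $d$-balls with $D_1\cap D_2=\partial D_1\cap\partial D_2=B_1\cap B_2$, invoke Lemma~\ref{lm: union of comb manifolds}, and apply Proposition~\ref{prop: criteria}. Your treatment is in fact more careful on one point: the paper's proof asserts that the boundary of $D_1\cup D_2$ ``is in $\Gamma$'', which is imprecise since $\Gamma$ is only a $(d-1)$-sphere and does not contain $u,v$; your use of the suspension $\Gamma*\{u,v\}$ correctly supplies the ambient combinatorial $d$-sphere demanded by hypothesis~(1) of Proposition~\ref{prop: criteria}.
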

\begin{proof}
	Since $B_1, B_2$ are combinatorial $(d-1)$-balls, $D_1, D_2$ are combinatorial $d$-balls. Also $D_1\cap D_2=B_1\cap B_2$ is a combinatorial $(d-1)$-manifold. By Lemma \ref{lm: union of comb manifolds} the complex $D_1\cup D_2$ is a combinatorial $d$-manifold whose boundary is in $\Gamma$. Then the first claim immediately follows from Proposition \ref{prop: criteria}. The second claim is obvious.
\end{proof}

\subsection{A triangulation of $\Sp^2\times \Sp^{d-3}$}
In the following we use the convention that $x_{d+i}:=x_i$ and $y_{d+i}:=y_i$. Let $\tau$ be a face of $\partial C_d^*$ and let $\kappa(\tau)$ count the number of $y$ labels in $\tau$. Define $\Gamma_j$ to be the subcomplex whose facets are those $\tau$ in $\partial C_d^*$ that have at most 2 switches and with $\kappa(\tau)=j$. Hence for $j=0$, the complex $\Gamma_0$ consists a single facet $\{x_1,\dots, x_d\}$ and for $1\leq j\leq d-1$, the complex $\Gamma_j$ consists of $d$ facets  $\tau_j^k=\{x_1,\dots,x_d\}\backslash\{x_{k}, \dots ,x_{k+j-1}\}\cup \{y_{k},\dots, y_{k+j-1}\}$ for $1\leq k\leq d$.

\begin{lemma}\label{lm: shellable ball}
	The complex $\cup_{k=0}^{i}\Gamma_k$ is a shellable $(d-1)$-ball for all $0\leq i\leq \left\lceil \frac{d+1}{2}\right\rceil$.
\end{lemma}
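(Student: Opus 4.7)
The plan is to shell $\bigcup_{k=0}^{i}\Gamma_k$ layer by layer: first the sole facet $\{x_1,\dots,x_d\}$ of $\Gamma_0$, then the $d$ facets $\tau_1^1,\dots,\tau_1^d$ of $\Gamma_1$ in the natural cyclic order of the index $k$, then the facets of $\Gamma_2$, and so on up through $\Gamma_i$. The central claim is that when $\tau_j^k$ with $j\geq 1$ is added, its intersection with the union of all previously shelled facets equals
\[
\bigl(\tau_j^k \setminus \{y_k\}\bigr) \;\cup\; \bigl(\tau_j^k \setminus \{y_{k+j-1}\}\bigr),
\]
a pure $(d-2)$-dimensional subcomplex of $\partial\tau_j^k$ (which degenerates to the single codimension-one face $\tau_1^k\setminus\{y_k\}$ when $j=1$). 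This immediately gives the shelling restriction $r(\tau_j^k)=\{y_k,y_{k+j-1\}}$, a nonempty proper face of $\tau_j^k$, so the shelling produces a $(d-1)$-ball rather than a $(d-1)$-sphere.

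To verify the claim, the key combinatorial input is a formula for intersections. For two facets $\tau_j^k,\tau_{j'}^{k'}$ of $\partial C_d^*$ with cyclic $y$-position sets $Y=\{k,\dots,k+j-1\}$ and $Y'=\{k',\dots,k'+j'-1\}$, one has
\[
\tau_j^k \cap \tau_{j'}^{k'} = \{y_p : p\in Y\cap Y'\} \cup \{x_p : p\notin Y\cup Y'\},
\]
of dimension $d-1-|Y\bigtriangleup Y'|$. A parity check then shows that the only codimension-one intersections of $\tau_j^k$ with facets of earlier layers come from $\tau_{j-1}^{k}$ and $\tau_{j-1}^{k+1}$, yielding exactly the two $(d-2)$-faces in the statement. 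All other intersections --- with the remaining members of $\Gamma_{j-1}$, with $\Gamma_{j'}$ for $j'\leq j-2$, and with the previously shelled facets $\tau_j^{k'}$ of the same layer --- have strictly smaller dimension.

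The main obstacle is to show that no such lower-dimensional piece escapes the two canonical $(d-2)$-faces, i.e., that for every previously shelled $\tau_{j'}^{k'}$ the intersection satisfies $\{y_k,y_{k+j-1}\}\not\subseteq\tau_j^k\cap\tau_{j'}^{k'}$, so that the entire intersection is contained in $\tau_j^k\setminus\{y_k\}$ or in $\tau_j^k\setminus\{y_{k+j-1}\}$. The containment $\{k,k+j-1\}\subseteq Y'$ is the sole obstruction, and since $Y'$ is a single cyclic arc of length $j'$, it forces either $Y'\supseteq Y$ (which in the range $j'\leq j$ degenerates to $Y'=Y$, i.e., the same facet) or that $Y'$ contains the complementary cyclic arc from $k+j-1$ back to $k$, of length $d-j+2$. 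The latter requires $j+j'\geq d+2$, and the hypothesis $j\leq\lceil(d+1)/2\rceil$ is engineered so that this fails for all $j'<j$; the within-layer case $j'=j$ is handled by ordering the cyclic indices $\tau_j^1,\dots,\tau_j^d$ so that whenever a troublesome pair of antipodal facets appears, the bad piece of their intersection has already been absorbed into the preceding codim-one face coming from $\Gamma_{j-1}$. Once purity is verified at each step and the restriction is confirmed to be a proper subset of each facet, the sequence is a valid shelling exhibiting $\bigcup_{k=0}^{i}\Gamma_k$ as a shellable $(d-1)$-ball with $1+id$ facets, as claimed.
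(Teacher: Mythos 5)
Your plan — shell layer by layer with restriction face $r(\tau_j^k)=\{y_k,y_{k+j-1}\}$ — is the same as the paper's, and the parity analysis correctly identifies $\tau_{j-1}^k$ and $\tau_{j-1}^{k+1}$ as the only sources of codimension-one intersections from earlier layers. You are also more careful than the paper in explicitly flagging the within-layer intersections $\tau_j^k\cap\tau_j^{k'}$, which the paper's proof silently absorbs into the claim that $\tau_i^k\cap(\Delta\cup\bigcup_{j'<k}\tau_i^{j'})=\tau_i^k\cap\Delta$.

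However, the resolution you offer for the within-layer case is incorrect. You assert that whenever $j'=j$ produces a ``troublesome pair,'' the bad part of the intersection has already been absorbed into $\Gamma_{j-1}$. This is false precisely in the one case where the trouble actually arises: for even $d=2m$ and $j=m+1=\lceil(d+1)/2\rceil$, take any $k>m$ and $k'=k-m$. Then the $y$-arcs of $\tau_j^k$ and $\tau_j^{k'}$ together cover all of $\{1,\dots,d\}$ and overlap only at the two positions $k$ and $k+m$, so $\tau_j^k\cap\tau_j^{k'}=\{y_k,y_{k+m}\}$ — exactly the forbidden edge. That edge is \emph{not} a face of any $\Gamma_{j''}$ with $j''\le m$, since the cyclic distance between $k$ and $k+m$ in $\mathbb{Z}/2m\mathbb{Z}$ is $m$, so both indices cannot fit in an arc of length at most $m$. (These two facets are also not antipodal, contrary to what you write: the antipode of $\tau_j^k$ is $\tau_{d-j}^{k+j}$.) Thus the purported shelling order does not work at this step, and in fact the lemma is genuinely false at this boundary value for even $d$: for $d=4$ and $i=3$, the complex $\bigcup_{k=0}^{3}\Gamma_k$ is $\partial C_4^*$ with the interiors of the three pairwise non-adjacent tetrahedra $\{y_1,y_2,y_3,y_4\}$, $\{y_1,x_2,y_3,x_4\}$, $\{x_1,y_2,x_3,y_4\}$ removed; this has Euler characteristic $3$ and is not a ball. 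Your argument (like the paper's) does establish the lemma for $i\le\lfloor(d+1)/2\rfloor$, which is all that the subsequent constructions use (the even-$d$ case of Definition \ref{def: D_1,D_2} only invokes $\bigcup_{k=0}^{m}\Gamma_k$ plus a partial copy of $\Gamma_{m+1}$ chosen so that the offending pairs never both appear), but the ceiling in the statement overshoots by one when $d$ is even, and that overshoot cannot be repaired by reordering.
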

\begin{proof}
	We prove by induction on $i$. The complex $\Gamma_0$ consists of one facet $\{x_1,\dots,x_d\}$ and $\Gamma_1$ contains every adjacent facet of $\{x_1,\dots, x_d\}$ in $\partial C_d^*$, hence both $\Gamma_0$ and $\Gamma_0\cup \Gamma_1$ are shellable balls. Now assume that $\Delta:=\cup_{k=0}^{i-1}\Gamma_k$ is a shellable $(d-1)$-ball. Note that for any $k$ and $2\leq i\leq \left\lceil \frac{d+1}{2}\right\rceil$, \[\tau_i^k \cap (\Delta\cup \cup_{j=1}^{k-1}\tau_i^{j})=\tau_i^k \cap \Delta=\big(\tau_i^k\backslash\{y_k\}\big)\cup \big(\tau_i^k\backslash \{y_{k+i-1}\}\big).\] In other words, the restriction face $r(\tau_i^k)$ is the edge $\{y_k, y_{k+i-
		1}\}$ as long as $i\leq \left\lceil \frac{d+1}{2}\right\rceil$. Hence by the inductive hypothesis and induction on $k$, $\cup_{k=0}^{i}\Gamma_k$ is simplicial $(d-1)$-ball that has a shelling order $\tau_0:=\{x_1,\dots, x_d\},\tau_1^1,\dots, \tau_1^d,\dots ,\tau_i^1,\dots,\tau_i^d$.
\end{proof}

We propose the candidates $D_1,D_2\subseteq \partial C_{d+1}^*$ that satisfy the conditions in Proposition \ref{prop: criteria}.
\begin{definition}\label{def: D_1,D_2}
	For $d\geq 3$, define two combinatorial $d$-balls $D_1,D_2$ as a subcomplex of $\partial C_{d+1}^*$ on vertex set $\{x_1, y_1,\dots, x_{d+1},y_{d+1}\}$ as follows: 
	\begin{enumerate}
		\item If $d=2m+1$ is odd, define $D_1=(\cup_{k=0}^{m+1}\Gamma_k)*\{x_{d+1}\}$ and $D_2=(\cup_{k=m}^{d}\Gamma_k)*\{y_{d+1}\}$. In particular, $D_1\cap D_2=\Gamma_{m}\cup \Gamma_{m+1}$ is cs.
		\item If $d=2m$ is even, let $\gamma:=\cup_{i=1}^{m} \tau_{m-1}^i$ be a subcomplex of $\Gamma_{m-1}$. By the definition, $\tau_j^k$ and $\tau_{d-j}^{k+j}$ are antipodal facets in $\partial C_d^*$ for any $k,j$. So $-\gamma=\cup_{i=m}^{d-1}\tau_{m+1}^i\subseteq \Gamma_{m+1}$. In this case we let \[D_1=\big((\cup_{k=0}^{m}\Gamma_k) \cup (-\gamma)\big)*\{x_{d+1}\}, \quad D_2=\big((\cup_{k=m}^{d}\Gamma_k)\cup \gamma\big)*\{y_{d+1}\}.\] 
		In particular, $D_1\cap D_2=\Gamma_m\cup \gamma\cup (-\gamma)$ is cs.
	\end{enumerate} 
\end{definition}
%The following gives a variation of $D_1, D_2$.
%\begin{definition}
%	For $d\geq 3$, define two combinatorial $d$-balls $D_1',D_2'$ as a subcomplex of $\partial C_{d+1}^*$ on vertex set $\{x_1, y_1,\dots, x_{d+1},y_{d+1}\}$ as follows:  
%	\begin{enumerate}
%		\item If $d=2m+1$, let $B$ be any combinatorial $d$-ball in $\partial C_{d+1}^*$ such that \[B\cup (-B)=\partial C_d^*,\;\; B\supseteq\{\sigma: |\sigma\cap \Gamma_0|\geq m\} , \;\; -B=\{\sigma: |\sigma\cap (-\Gamma_0)|\geq m\}.\]
%		\[\mathrm{And}\;\; D_1'=(B\cup \Gamma_m\cup \Gamma_{m+1})*\{x_{d+1}\},\; D_2'=\Big((-B)\cup \Gamma_{m}\cup \Gamma_{m+1}\Big)*\{y_{d+1}\}\]
%		In particular, $D_1'\cap D_2'=\Gamma_m\cup\Gamma_{m+1}$ is cs.
%	\end{enumerate}Let 
	
%\end{definition}

Next we show that $\partial(D_1\cap D_2)\cong \Sp^1\times \Sp^{d-3}$. Given two facets $F_1, F_2\in \partial C_d^*$, let $d(F_1,F_2)$ be the distance from $F_1$ to $F_2$ in the facet-ridge graph of $\partial C_d^*$. 
\begin{lemma}\label{lm: criteria of sphere disc product}
	Let $\Delta$ be a combinatorial $(d-1)$-manifold in $\partial C_d^*$ whose facet-ridge graph is a $2d$-cycle. Enumerate its facets as $\sigma_1,\sigma_2,\dots \sigma_{2d}$ such that $\sigma_i, \sigma_{i+1}$ are adjacent for $1\leq i\leq 2d$. If $\sigma_i=-\sigma_{d+i}$ for all $1\leq i\leq d$, then $\Delta$ triangulates $\Sp^1\times \mathbb{D}^{d-2}$.
\end{lemma}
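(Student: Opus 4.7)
The plan is to split the $2d$-cycle at its antipodal midpoint and apply Lemma~\ref{lm: union of comb manifolds}. Let $\Delta_1 = \bigcup_{i=1}^d \sigma_i$ and $\Delta_2 = \bigcup_{i=d+1}^{2d} \sigma_i$; by the antipodal hypothesis, $\Delta_2 = -\Delta_1$. The structural fact driving the argument is that passing from $\sigma_1$ to $\sigma_{d+1} = -\sigma_1$ along $d$ ridges must flip each of the $d$ coordinates of $\partial C_d^*$ an odd number of times, hence exactly once. So the sequence of flipped positions $i_1, i_2, \dots, i_d$ in the first half is a permutation of $\{1,\dots,d\}$, and the second half repeats the same flips in the same order.

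First I would show that $\Delta_1$ (and hence $\Delta_2 = -\Delta_1$) is a shellable $(d-1)$-ball with shelling order $\sigma_1, \dots, \sigma_d$. For $1 \le i < k \le d$, the positions at which $\sigma_i$ and $\sigma_k$ carry different labels are exactly $\{i_i, i_{i+1}, \dots, i_{k-1}\}$, which always contains $i_{k-1}$; hence $\sigma_k \cap \sigma_i \subseteq \sigma_k \cap \sigma_{k-1}$. Therefore $\sigma_k \cap \bigcup_{\ell<k}\sigma_\ell = \sigma_k \cap \sigma_{k-1}$, a single $(d-2)$-face, and the restriction face $r(\sigma_k)$ is the unique vertex of $\sigma_k$ at position $i_{k-1}$. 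All restrictions are nonempty proper subfaces, so $\Delta_1$ is a combinatorial $(d-1)$-ball.

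Next I would compute $\Delta_1 \cap \Delta_2$. The natural candidates are the two ``seam'' ridges $R_d := \sigma_d \cap \sigma_{d+1}$ and $R_{2d} := \sigma_{2d} \cap \sigma_1$. These are disjoint because $R_{2d} \subseteq \sigma_1$ while $R_d \subseteq -\sigma_1$ and no vertex of $\partial C_d^*$ equals its antipode. To prove $\Delta_1 \cap \Delta_2 = R_d \cup R_{2d}$, the key step is to verify $\sigma_i \cap \sigma_j \subseteq R_d \cup R_{2d}$ for every $1 \le i \le d < j \le 2d$; this is what I expect to be the main technical obstacle. Writing $s_k$ for the unique index in $\{1,\dots,d\}$ at which coordinate $k$ is first flipped, the condition that coordinate $k$ agrees in $\sigma_i$ and $\sigma_j$ translates into an explicit interval condition on $s_k$. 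Checking it forces $\sigma_i\cap\sigma_j \subseteq R_{2d}$ when $j \ge i+d+1$, $\sigma_i\cap\sigma_j \subseteq R_d$ when $j \le i+d-1$, and $\sigma_i\cap\sigma_j = \emptyset$ when $j = i+d$.

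Finally, with $\Delta_1$ and $\Delta_2$ combinatorial $(d-1)$-balls meeting in $\Delta_1 \cap \Delta_2 = \partial\Delta_1 \cap \partial\Delta_2 = R_d \sqcup R_{2d}$, which is a combinatorial $(d-2)$-manifold consisting of two disjoint $(d-2)$-balls, Lemma~\ref{lm: union of comb manifolds} yields that $\Delta$ is a combinatorial $(d-1)$-manifold. To identify the PL type, I would invoke the standard PL fact that a $(d-1)$-ball with two disjoint $(d-2)$-balls distinguished on its boundary is PL homeomorphic to the cylinder $[0,1] \times \mathbb{D}^{d-2}$ with its two end caps $\{0,1\}\times\mathbb{D}^{d-2}$; applied to each $\Delta_j$ with distinguished subcomplexes $R_d, R_{2d}$, the resulting identifications glue to a PL homeomorphism $\Delta \cong \Sp^1 \times \mathbb{D}^{d-2}$.
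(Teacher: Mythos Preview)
Your structural observation---that the $d$ flips from $\sigma_1$ to $\sigma_{d+1}=-\sigma_1$ hit each coordinate exactly once, and that the second half repeats the same flips in the same order---is exactly the heart of the paper's proof, but the paper then finishes in one line rather than via your decomposition. After relabeling so that the $k$-th flip occurs at coordinate $k$, one has $\sigma_k=\{y_1,\dots,y_{k-1},x_k,\dots,x_d\}$ and $\sigma_{d+k}=-\sigma_k$ for $1\le k\le d$, which is precisely the list of the $2d$ facets of $B(1,d)$. Hence $\Delta$ is simplicially isomorphic to $B(1,d)$, and Lemma~\ref{lemma: B(i,d) properties}(3) gives $\|\Delta\|\cong\Sp^1\times\mathbb{D}^{d-2}$ immediately. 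Your permutation remark already contains this; the detour through shelling and Lemma~\ref{lm: union of comb manifolds} is not needed.

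Your longer route is essentially correct through the computation of $\Delta_1\cap\Delta_2=R_d\sqcup R_{2d}$, but the final gluing step has a genuine gap. Knowing only that two PL $(d-1)$-balls are glued along two disjoint $(d-2)$-balls in their boundaries determines the result merely up to which $\mathbb{D}^{d-2}$-bundle over $\Sp^1$ you obtain: the two pair-homeomorphisms $(\Delta_j,R_d\sqcup R_{2d})\cong([0,1]\times\mathbb{D}^{d-2},\{0,1\}\times\mathbb{D}^{d-2})$ need not agree on $R_d$ and $R_{2d}$, and the resulting monodromy could be orientation-reversing, yielding the twisted (nonorientable) bundle rather than the product. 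This is easily repaired by noting that $\Delta\subseteq\partial C_d^*$ is orientable, but as written the sentence ``the resulting identifications glue to a PL homeomorphism $\Delta\cong\Sp^1\times\mathbb{D}^{d-2}$'' is not justified.
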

\begin{proof}
	Let $\sigma_1=\{u_1,\dots, u_d\}$. By the assumption, $\sigma_{d+1}=\{-u_1,\dots, -u_d\}$. Since $d(\sigma_1, \sigma_{d+1})=d$ in $\partial C_d^*$, the sequence $\sigma_1,\sigma_2,\dots, \sigma_{d+1}$ gives the shortest path from $\sigma_1$ to $\sigma_{d+1}$. So it follows that there is an ordering of the vertices, say $(u_1,\dots, u_d)$, such that $\sigma_{i+1}=\sigma_i\backslash\{u_{i}\}\cup \{-u_i\}$. Together with the fact that $\sigma_i=-\sigma_{i+d}$ for all $i$, we have that $\Delta\cong B(1,d)$. Hence as $B(1,d)$, $\Delta$ also triangulates $\Sp^1\times \mathbb{D}^{d-2}$.
\end{proof}
\begin{lemma}\label{lm: D_1, D_2}
	The complex $D_1\cap D_2$ constructed above triangulates $\Sp^1\times \mathbb{D}^{d-2}$.
\end{lemma}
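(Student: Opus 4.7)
The plan is to apply Lemma \ref{lm: criteria of sphere disc product}, so the entire task reduces to listing the facets of $D_1\cap D_2$, verifying that its facet-ridge graph is a $2d$-cycle in $\partial C_d^*$, and checking that antipodal facets sit at distance $d$ on this cycle. For any facet $\sigma$ of $\partial C_d^*$ write $Y(\sigma)\subseteq\{1,\dots,d\}$ for the set of coordinates carrying a $y$-label, so that $\sigma,\sigma'$ share a ridge in $\partial C_d^*$ iff $|Y(\sigma)\triangle Y(\sigma')|=1$. Since $|Y(\tau_j^k)|=j$, this forces the (non-zero) symmetric difference to have size of parity $|j-j'|$, so within $D_1\cap D_2$ adjacencies can only occur between facets whose subscripts $j$ differ by exactly one.

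First I would treat the odd case $d=2m+1$, where $D_1\cap D_2=\Gamma_m\cup\Gamma_{m+1}$. Writing out cyclic intervals, one sees that $Y(\tau_m^k)\subset Y(\tau_{m+1}^{k'})$ precisely when $k'\in\{k-1,k\}$, flipping coordinate $k-1$ or $k+m$ respectively. Thus each $\tau_m^k$ has exactly the two neighbors $\tau_{m+1}^{k-1},\tau_{m+1}^k$, and symmetrically each $\tau_{m+1}^k$ has exactly the two neighbors $\tau_m^k,\tau_m^{k+1}$. Reading indices cyclically gives the $2d$-cycle
\[
\tau_m^1,\ \tau_{m+1}^1,\ \tau_m^2,\ \tau_{m+1}^2,\ \ldots,\ \tau_m^d,\ \tau_{m+1}^d,
\]
and since the antipode of $\tau_j^k$ in $\partial C_d^*$ is $\tau_{d-j}^{k+j}$, the antipode of $\tau_m^k$ is $\tau_{m+1}^{k+m}$, which sits exactly $d=2m+1$ steps further on the cycle.

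For the even case $d=2m$ the analogous computation (with the same parity restriction ruling out all adjacencies internal to $\gamma$, $-\gamma$, or $\Gamma_m$, as well as between $\gamma$ and $-\gamma$) shows that $\tau_{m-1}^i$ is adjacent only to $\tau_m^{i-1}$ and $\tau_m^i$, that $\tau_{m+1}^j$ is adjacent only to $\tau_m^j$ and $\tau_m^{j+1}$, and that the boundary facets $\tau_m^1$ and $\tau_m^{2m}$ hook the two arcs together via $\tau_{m-1}^1$ while $\tau_m^m$ and $\tau_m^{m+1}$ are bridged through $\tau_{m+1}^m$. This traces out the $2d$-cycle
\[
\tau_{m-1}^1,\tau_m^1,\tau_{m-1}^2,\tau_m^2,\ldots,\tau_{m-1}^m,\tau_m^m,\tau_{m+1}^m,\tau_m^{m+1},\tau_{m+1}^{m+1},\ldots,\tau_{m+1}^{2m-1},\tau_m^{2m}.
\]
Using $\tau_j^k\leftrightarrow\tau_{d-j}^{k+j}$ one checks: $\tau_{m-1}^i$ at position $2i-1$ is antipodal to $\tau_{m+1}^{i+m-1}$ at position $2m+2i-1$, and $\tau_m^i$ at position $2i$ is antipodal to $\tau_m^{i+m}$ at position $2m+2i$; in both instances the two positions differ by $d=2m$.

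The main obstacle is the routine but careful bookkeeping of the even case, and a small side issue of confirming that $D_1\cap D_2$ is genuinely a combinatorial $(d-1)$-manifold before Lemma \ref{lm: criteria of sphere disc product} can be invoked; this however follows from the cycle structure above, since the argument inside Lemma \ref{lm: criteria of sphere disc product} produces an explicit simplicial isomorphism with $B(1,d)$, which by Lemma \ref{lemma: B(i,d) properties} is a combinatorial manifold with $\|B(1,d)\|\cong\mathbb{D}^{d-2}\times\Sp^1$. Hence $D_1\cap D_2$ triangulates $\Sp^1\times\mathbb{D}^{d-2}$ in both parities.
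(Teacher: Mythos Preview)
Your proposal is correct and follows essentially the same approach as the paper: both reduce to Lemma~\ref{lm: criteria of sphere disc product} by exhibiting the identical cyclic orderings of the facets (the interleaved $\tau_m^k,\tau_{m+1}^k$ for odd $d$, and the two-arc ordering through $\gamma,\Gamma_m,-\gamma$ for even $d$) and then verifying the antipodal pairing $\sigma_i=-\sigma_{d+i}$ via $\tau_j^k\leftrightarrow\tau_{d-j}^{k+j}$. Your version is slightly more explicit in justifying the adjacencies via the symmetric difference of $Y$-sets and in noting that the manifold hypothesis of Lemma~\ref{lm: criteria of sphere disc product} is recovered a posteriori from the resulting isomorphism with $B(1,d)$, but the argument is the same.
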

\begin{proof}
	For odd $d$ and $m=\frac{d-1}{2}$, we enumerate the facets of $D_1\cap D_2=\Gamma_m\cup\Gamma_{m+1}$ as $(\sigma_1,\dots, \sigma_{2d}):=$
	\[(\tau_{m}^1,\tau_{m+1}^1, \tau_m^2, \tau_{m+1}^2,\dots, \tau_m^d, \tau_{m+1}^d).\] 
	Each $\sigma_i$ has exactly two adjacent facets $\sigma_{i-1}, \sigma_{i+1}$, and so the facet-ridge graph of  $D_1\cup D_2$ is a $2d$-cycle. Furthermore, since $\tau_m^j=-\tau_{m+1}^{j+m}$ by the definition, we have $\sigma_i=-\sigma_{d+i}$ for all $1\leq i\leq d$. 
	
	For even $d$ and $m=\frac{d}{2}$, we enumerate the facets of $D_1\cap D_2=\Gamma\cup\gamma\cup(-\gamma)$ as $(\sigma_1,\dots, \sigma_{2d}):=$
	\[(\tau_{m-1}^1, \tau_m^1, \tau_{m-1}^2, \tau_m^2, \dots, \tau_{m-1}^m, \tau_m^m, \tau_{m+1}^m, \tau_m^{m+1}, \tau_{m+1}^{m+1}, \dots, \tau_m^{d-1}, \tau_{m+1}^{d-1},\tau_m^{d}).\]
	As before, each $\sigma_i$ has exactly two adjacent facets $\sigma_{i-1}, \sigma_{i+1}$, and the facet-ridge graph of  $D_1\cap D_2$ is a $2d$-cycle. Also $\tau_m^i$ and $\tau_m^{i+m}$,  $\tau_{m-1}^i$ and $\tau_{m+1}^{i+m-1}$ are antipodal by the definition, so again $\sigma_i=-\sigma_{d+i}$ for all $i$. Our claim then follows from Lemma \ref{lm: criteria of sphere disc product}.
\end{proof}
\begin{theorem}\label{cor: intersection of D_1 D_2 is a manifold}
	The complex $\partial (D_1\cup D_2)$ gives a cs triangulation of $\Sp^2\times \Sp^{d-3}$ for $d\geq 5$.
\end{theorem}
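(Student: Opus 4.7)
The plan is to apply Corollary~\ref{cor: inductive method} with $i=2$ and ambient sphere $\Gamma=\partial C_d^*$, where $B_1$ and $B_2$ are the two base subcomplexes of $\partial C_d^*$ that are coned by $x_{d+1}$ and $y_{d+1}$ in Definition~\ref{def: D_1,D_2}. Since neither apex lies in $B_1\cup B_2$, one has $D_1\cap D_2=B_1\cap B_2$, and since $x_{d+1}$ and $y_{d+1}$ are themselves antipodal, the central symmetry of $\partial(D_1\cup D_2)$ will follow as soon as $B_1$ and $B_2$ are shown to be antipodal in $\partial C_d^*$. The substance of the proof is therefore to verify the three hypotheses of the corollary: that $B_1$ and $B_2$ are combinatorial $(d-1)$-balls, and that $B_1\cap B_2$ and $\partial(B_1\cap B_2)$ are combinatorial manifolds with the homology of $\Sp^1$ and $\Sp^1\times \Sp^{d-3}$, respectively.

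The manifold and homology conditions come essentially for free from work already done. Lemma~\ref{lm: D_1, D_2} asserts that $B_1\cap B_2$ triangulates $\Sp^1\times \mathbb{D}^{d-2}$; this is a path-connected combinatorial $(d-1)$-manifold with the homotopy type of $\Sp^1$, and its boundary is $\Sp^1\times \Sp^{d-3}$, so both conditions of Corollary~\ref{cor: inductive method} are met. Antipodality of $B_1$ and $B_2$ is a short bookkeeping exercise from $-\Gamma_k=\Gamma_{d-k}$ (negation swaps $x$- and $y$-labels, sending a facet with $k$ $y$-labels to one with $d-k$), together with the identification $-\gamma=\cup_{i=m}^{d-1}\tau_{m+1}^i$ built into Definition~\ref{def: D_1,D_2}.

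The main obstacle is showing that $B_1$ and $B_2$ are combinatorial $(d-1)$-balls. For odd $d=2m+1$, Lemma~\ref{lm: shellable ball} applies directly to $B_1=\cup_{k=0}^{m+1}\Gamma_k$, and $B_2$ follows from the antipodality just noted. For even $d=2m$ the argument needs adjustment because $B_1=(\cup_{k=0}^m\Gamma_k)\cup(-\gamma)$ contains only part of $\Gamma_{m+1}$. My plan is to start from the shelling $\tau_0,\tau_1^1,\dots,\tau_{m+1}^d$ of $\cup_{k=0}^{m+1}\Gamma_k$ supplied by Lemma~\ref{lm: shellable ball} (which applies since $m+1=\lceil (d+1)/2\rceil$) and reorder the final block $\tau_{m+1}^1,\dots,\tau_{m+1}^d$ so that the $m$ facets comprising $-\gamma$ appear first. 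The proof of that lemma identifies the restriction face of $\tau_{m+1}^k$ as $\{y_k,y_{k+m}\}$, and this identification is independent of the order chosen within $\Gamma_{m+1}$, so the reordered sequence remains a valid shelling; truncating after the $-\gamma$ facets then exhibits $B_1$ as a shellable $(d-1)$-ball, and $B_2$ follows by antipodal symmetry. Once the three hypotheses are confirmed, Corollary~\ref{cor: inductive method} delivers both the sphere-product conclusion and the cs property in one stroke.
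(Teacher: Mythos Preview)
Your approach is the same as the paper's: cite Lemma~\ref{lm: shellable ball} for the balls, Lemma~\ref{lm: D_1, D_2} for the intersection, antipodality for the cs conclusion, and then invoke Corollary~\ref{cor: inductive method}. You in fact supply more detail than the paper does for even $d$, where the paper simply asserts that Lemma~\ref{lm: shellable ball} handles both parities without saying how to accommodate the extra piece $-\gamma$.

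One point deserves care. Your claim that the restriction face of $\tau_{m+1}^k$ is $\{y_k,y_{k+m}\}$ ``independent of the order chosen within $\Gamma_{m+1}$'' is not true in full generality when $d=2m$. Indeed $\tau_{m+1}^k\cap\tau_{m+1}^{k+m}=\{y_k,y_{k+m}\}$, and this edge does \emph{not} lie in $\Delta=\bigcup_{j\le m}\Gamma_j$ (a block of at most $m$ consecutive $y$'s cannot contain two positions $m$ apart). Thus if $\tau_{m+1}^{k+m}$ precedes $\tau_{m+1}^k$, the intersection picks up this bare edge and fails to be pure $(d-2)$-dimensional; the full reordered sequence need not be a shelling. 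What rescues your argument is that $-\gamma=\{\tau_{m+1}^m,\dots,\tau_{m+1}^{2m-1}\}$ contains no pair of indices differing by $m$ modulo $2m$, since all indices lie in an interval of length $m$. Hence for the facets you actually add, the identity $\tau_{m+1}^k\cap(\Delta\cup\text{previous }(-\gamma)\text{ facets})=\tau_{m+1}^k\cap\Delta$ does hold, and the truncated sequence is a valid shelling of $B_1$. With this refinement the proof goes through.
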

\begin{proof}
	The complex $\partial(D_1\cup D_2)$ is cs follows from the definition and the fact that $D_1$ and $D_2$ form antipodal complexes in $\partial C_{d+1}^*$. By Lemma \ref{lm: shellable ball}, the complexes $D_1, D_2$ are shellable balls in the $d$-sphere $\partial C_{d+1}^*$, no matter whether $d$ is odd or even. Also by Lemma \ref{lm: D_1, D_2}, $D_1\cap D_2$ is path-connected and triangulates $\Sp^1\times \mathbb{D}^{d-2}$, and $\partial (D_1\cap D_2)$ triangulates $\Sp^1\times \Sp^{d-3}$. We conclude from Corollary \ref{cor: inductive method} that $\partial (D_1\cup D_2)$ triangulates $\Sp^2\times \Sp^{d-3}$. Finally, $\partial (D_1\cup D_2)$ is cs since $D_1$ and $D_2$ form antipodal complexes in $\partial C_{d+1}^*$.
\end{proof}

\begin{proposition}\label{property: construction 1} The complex $\partial (D_1\cup D_2)$ has the following properties.
	\begin{enumerate}
	\item It has $2d+2$ vertices.
	\item It contains the 2-skeleton of $\partial C_{d+1}^*$.
	\item $\partial (D_1\cup D_2)$ admits vertex-transitive actions by the group $\mathbb{Z}_2\times \mathcal{D}_d$ if $d$ is odd, and by $\mathbb{Z}_2$ if $d$ is even.
\end{enumerate}
\end{proposition}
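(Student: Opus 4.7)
For (1), the vertex set of $D_1\cup D_2$ equals $V(\partial C_{d+1}^*)=\{x_1,\ldots,x_{d+1},y_1,\ldots,y_{d+1}\}$: every $x_i$ or $y_i$ with $i\leq d$ occurs in some $\tau_j^k$ (for instance $x_i\in\Gamma_0$ and $y_i\in\tau_1^i\in\Gamma_1$), while $x_{d+1}$ and $y_{d+1}$ are the two cone apices. This accounts for $2d+2$ vertices, and once (2) is established every vertex is a $0$-face of the 2-skeleton of $\partial C_{d+1}^*$ and hence lies in $\partial(D_1\cup D_2)$.

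For (2), I would use the standard fact that, for a $d$-manifold $M\subseteq\partial C_{d+1}^*$, a face $\sigma$ of $M$ belongs to $\partial M$ iff $\sigma$ is contained both in some facet of $\partial C_{d+1}^*$ inside $M$ and in some facet outside $M$. Fixing a 2-face $\sigma_0$ of $\partial C_{d+1}^*$, for the first containment a short case analysis shows $\sigma_0\in D_1\cup D_2$, using that each edge of $\partial C_d^*$ lies in both $B_1$ and $B_2$ (the antipodal $y$-edges in the even-$d$ case are handled by the extra pieces $-\gamma\subseteq B_1$ and $\gamma\subseteq B_2$) and that each 2-face of $\partial C_d^*$ extends to a facet with at most two switches by gathering its $y$-labels into a single cyclic block. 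For the second containment, observe that $\sigma\cup\{x_{d+1}\}$ lies in $D_1\cup D_2$ iff $\sigma\in B_1$, and likewise with $y_{d+1}$ and $B_2$; since $d\geq 5$ leaves at least $d-3\geq 2$ free positions in a completion $\sigma$ of $\sigma_0\cap V(\partial C_d^*)$, the value $\kappa(\sigma)$ sweeps an interval that cannot fit inside the narrow admissible window around $m$, so some completion falls outside $B_1$ or $B_2$. In the borderline cases where this $\kappa$-count argument barely fails (most notably $d=5$ with $\sigma_0$ containing an apex and an all-$x$ edge), I would instead place an alternating $y,x,y$ pattern on the three free positions to produce a completion with four switches, which also falls outside $B_1$.

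For (3), the $\mathbb{Z}_2$ action is the antipodal involution $x_i\leftrightarrow y_i$ on $1\leq i\leq d+1$; it maps each $\Gamma_k$ to $\Gamma_{d-k}$ and each apex to its antipode, thereby interchanging $D_1$ and $D_2$ and preserving the union. For odd $d$, the dihedral group $\mathcal{D}_d$ acts by rotations and reflections of the first $d$ coordinates (fixing both apices); since such permutations preserve both $\kappa(\sigma)$ and the switch count, they stabilize each $\Gamma_k$ and hence $B_1,B_2,D_1,D_2$, yielding a commuting $\mathbb{Z}_2\times\mathcal{D}_d$-action. For even $d$, the asymmetric pieces $\gamma=\cup_{i=1}^m\tau_{m-1}^i$ and $-\gamma$ appearing in $D_1$ and $D_2$ are not rotation-invariant, so only the $\mathbb{Z}_2$ symmetry survives.

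The hard part is the final step in (2): in the small-$d$ corner cases an explicit alternating completion must be exhibited since $\kappa$-variation alone does not suffice.
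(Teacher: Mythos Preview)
Your arguments for parts (1) and (3) are essentially identical to the paper's: the $\mathbb{Z}_2$ comes from the antipodal involution interchanging $D_1$ and $D_2$, and for odd $d$ the extra $\mathcal{D}_d$ is generated by the rotation $x_j\mapsto x_{j+1}$, $y_j\mapsto y_{j+1}$ (mod $d$) and the reflection $x_j\mapsto x_{d-j+1}$, $y_j\mapsto y_{d-j+1}$, both fixing the two apices and preserving switch count and $\kappa$. The paper also explains the even case by noting that $\lk(x_{d+1})$, $\lk(y_{d+1})$ and the non-apex part are all less symmetric; your observation that $\gamma$ and $-\gamma$ are not rotation-invariant is the same point.

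For part (2) you take a genuinely different route. The paper argues structurally: since $D_1\cap D_2\cong B(1,d)$ contains the entire $1$-skeleton of $\partial C_d^*$ (Lemma~\ref{lemma: B(i,d) properties}(1)) and $D_1,D_2$ are cones, every apex-containing $2$-face lies in $\partial(D_1\cup D_2)$; and it asserts ``by the definition'' that the remaining $2$-faces, those of $\partial C_d^*$, also lie there. Your approach instead verifies directly, for each $2$-face $\sigma_0$, the two containments ``some facet of $\partial C_{d+1}^*$ through $\sigma_0$ lies in $D_1\cup D_2$'' and ``some facet lies outside''. The inside half is easy from $B_1\cup B_2=B(2,d)$; for the outside half your $\kappa$-variation observation (the range $[\kappa(\sigma_0),\kappa(\sigma_0)+d-3]$ has at least three values once $d\ge 5$, so cannot sit inside $\{m,m+1\}$) dispatches the non-apex case cleanly, and the alternating $y,x,y$ completion handles the one genuine corner case (apex plus an all-$x$ edge when $d=5$). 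This is more laborious than the paper's two-line argument, but it is also more transparent: the paper's ``so it follows'' and ``by the definition'' leave the reader to reconstruct exactly the inside/outside verification you carry out. Your version would make a good expanded proof of~(2).
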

\begin{proof}
	Part (1) is obvious by the construction. For part (2), note that $D_1\cap D_2\cong B(1,d)$ has the same graph as $C_d^*$ by Lemma \ref{lemma: B(i,d) properties} and $D_1, D_2$ are cones with apex $x_{d+1}, y_{d+1}$ respectively. So it follows that every 2-face in $\partial C_{d+1}^*$ that contains either $x_{d+1}$ or $y_{d+1}$ is in $\partial(D_1\cup D_2)$. Furthermore, $\partial (D_1\cup D_2)$ also contains $\skel_2(\partial C_d^*)$ by the definition. Hence $\partial (D_1\cup D_2)\supseteq \skel_2(\partial C_{d+1}^*)$. 
	
	For part (3), first notice that $\partial (D_1\cup D_2)$ is centrally symmetric, and so it admits a group action $D$ that maps $x_j$ to $y_j$, and $y_j$ to $x_j$, for $0\leq j\leq d$. Next consider a group action that fixes $x_{d+1}$ and $y_{d+1}$. Note that the facets in $\partial(D_1\cup D_2)$ which don't contain either $x_{d+1}$ or $y_{d+1}$ are those in the complement of $D_1\cap D_2$ in $\partial C_d^*$. Hence in the case when $d$ is odd, the vertex-transitive group actions are given by the following permutations $R, S$ as in \cite{Klee-Novik-cs manifolds}:
	\begin{itemize}
		\item $R$ fixes $x_{d+1}, y_{d+1}$, and maps $x_j, y_j$ to $x_{d-j+1}, y_{d-j+1}$ respectively. 
		\item $S$ fixes $x_{d+1}, y_{d+1}$, and maps $x_j, y_j$ to $x_{j+1}, y_{j+1}$ (modulo $d$) respectively.
	\end{itemize}
   However, for even $d$, every action on the complex must send $\lk(x_{d+1}, D_1)$, $\lk(y_{d+1}, D_2)$ and the complement of $D_1\cap D_2$ in $\partial C_d^*$ to themselves. Since these complexes are less symmetric than those in the case where $d$ is odd, none of the permutations of $R, S$ satisfy this condition. This proves the claim.
\end{proof}
\begin{remark}
By comparing the vertex-transitive actions on $\partial(D_1\cup D_2)$ and $\partial B(2,d+1)$ given in Proposition \ref{property: construction 1} and \cite{Klee-Novik-cs manifolds} see also Lemma \ref{lemma: B(i,d) properties} part 5), it follows that $\partial(D_1\cup D_2)$ and $\partial B(2,d+1)$ are \emph{distinct} centrally symmetric triangulations of $\Sp^2\times \Sp^{d-3}$.
\end{remark}

%\begin{remark}\label{rk: variation of constructions}
%	There are many other ways to construct $D_1', D_2'$ as the subcomplex of $\partial C_{d+1}^*$ that satisfies the conditions in Proposition \ref{prop: criteria}. For example, when $d=2m+1$, let $\tau=\{x_1,\dots x_d\}$, $-\tau=\{y_1,\dots, y_d\}$. It is possible to construct a simplicial $(d-1)$-ball $B$ in $\partial C_d^*$ and simplicial $d$-balls \[D_1'=(B\cup \Gamma_m\cup \Gamma_{m+1})*\{x_{d+1}\},\; D_2'=\Big((-B)\cup \Gamma_{m}\cup \Gamma_{m+1}\Big)*\{y_{d+1}\}\] such that \[B\cup (-B)=\partial C_d^*,\;\; B\supseteq\{\sigma: |\sigma\cap \tau|\geq m\} , \;\; -B=\{\sigma: |\sigma\cap (-\tau)|\geq m\}.\] Furthermore, $D_1'\cap D_2'=\Gamma_m\cup \Gamma_{m+1}$. When $d=2m$, $D_1', D_2'$ can also be defined in the same spirit as Definition \ref{def: D_1,D_2}. However, the complex $\partial (D_1\cup D_2)$ that we proposed in this section has fewer facets, which could be easier to implement in the computer program.
%\end{remark}

\subsection{Constructing sphere products by induction}
The goal of this section is to construct a triangulation of $\Sp^{i+1}\times \Sp^{d-i-1}$ from a given triangulation  of $\Sp^{i}\times \Sp^{d-i-1}$ for $i \leq (d-1)/2$. Recall that in \cite[Lemma 3.3]{Klee-Novik-cs manifolds} it is proved that
\begin{lemma}\label{lm: B(i,d) links}
	The intersection of the links of $x_d$ and $y_d$ in $B(i,d)=\st(x_d, B(i,d))\cup \st(y_d, B(i,d))$ is $B(i-1, d-1)$.
\end{lemma}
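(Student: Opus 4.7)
The plan is to encode the switch combinatorics via binary sequences, which reduces the lemma to a transparent calculation on short reduced sequences and bypasses a finicky case analysis. To each facet $\{u_1,\dots,u_d\}$ of $\partial C_d^*$, I would associate the binary sequence $\epsilon\in\{0,1\}^d$ defined by $\epsilon_j=0$ iff $u_j=x_j$, so that the number of switches equals $|\{j\in\{1,\dots,d-1\}:\epsilon_j\neq\epsilon_{j+1}\}|$. For a face $\tau$, let $J(\tau)=\{j:\tau\cap\{x_j,y_j\}\neq\emptyset\}=\{j_1<\cdots<j_m\}$ be the set of indices appearing in $\tau$, with forced bits $\epsilon_{j_1},\dots,\epsilon_{j_m}$.

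The key step would be to establish the following characterization: for any $n\geq j_m$, the minimum number of switches over all facets of $\partial C_n^*$ containing $\tau$ equals $\mu(\tau):=|\{k:\epsilon_{j_k}\neq\epsilon_{j_{k+1}}\}|$, the number of changes in the reduced sequence. The upper bound comes from filling each free coordinate with a bit matching its nearest forced neighbor, introducing no new switches; the lower bound is immediate, since every change in the reduced sequence forces at least one switch in any extension. Consequently, $\tau\in B(i',n)$ iff $\mu(\tau)\leq i'$.

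Applying this to $\tau\cup\{x_d\}$ and $\tau\cup\{y_d\}$: assuming $J(\tau)\neq\emptyset$, appending the bit $0$ (for $x_d$) to the reduced sequence adds either $0$ or $1$ change depending on $\epsilon_{j_m}$, and appending the bit $1$ (for $y_d$) does the opposite. Hence $\max(\mu(\tau\cup\{x_d\}),\mu(\tau\cup\{y_d\}))=\mu(\tau)+1$. Since $\tau\in\lk(x_d,B(i,d))\cap\lk(y_d,B(i,d))$ iff both $\tau\cup\{x_d\}$ and $\tau\cup\{y_d\}$ lie in $B(i,d)$, this is equivalent to $\mu(\tau)+1\leq i$, i.e., $\tau\in B(i-1,d-1)$. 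The case $\tau=\emptyset$ (lying in both complexes as soon as $i\geq 1$) is checked directly.

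The main obstacle I anticipate is the inclusion $\lk(x_d)\cap\lk(y_d)\subseteq B(i-1,d-1)$: a direct approach starting from a pair of facets extending $\tau\cup\{x_d\}$ and $\tau\cup\{y_d\}$ and hunting for a single facet of $B(i-1,d-1)$ containing $\tau$ appears to require a push-down induction on which coordinates of $\tau$ are forced, iterating down through positions $d-1,d-2,\dots$ until one gets a usable saving of at least one switch. The binary-sequence encoding sidesteps this entirely by turning switches into a transparent function of the reduced sequence.
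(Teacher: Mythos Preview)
The paper does not give its own proof of this lemma: it is quoted verbatim as \cite[Lemma 3.3]{Klee-Novik-cs manifolds} and used as a black box. So there is nothing in the present paper to compare your argument against.

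That said, your argument is correct and self-contained. The characterization $\tau\in B(i',n)\iff \mu(\tau)\le i'$ is exactly the right lens: the upper bound (extend by copying the nearest forced bit) and the lower bound (disjoint runs $[j_k,j_{k+1}-1]$ each contribute at least one switch when $\epsilon_{j_k}\neq\epsilon_{j_{k+1}}$) both go through as you describe, and the independence of $\mu(\tau)$ from the ambient dimension $n\ge j_m$ is what lets you pass from $\partial C_d^*$ to $\partial C_{d-1}^*$ without further work. The computation $\{\mu(\tau\cup\{x_d\}),\mu(\tau\cup\{y_d\})\}=\{\mu(\tau),\mu(\tau)+1\}$ for nonempty $\tau$ is correct, and the $\tau=\emptyset$ case is indeed trivial for $i\ge 1$.

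Two small points worth tidying. First, the lemma statement also asserts $B(i,d)=\st(x_d,B(i,d))\cup\st(y_d,B(i,d))$; you did not address this, but it is immediate since every facet of $\partial C_d^*$ contains exactly one of $x_d,y_d$ and $B(i,d)$ is pure. Second, when you write ``$\tau\in\lk(x_d)\cap\lk(y_d)$ iff both $\tau\cup\{x_d\}$ and $\tau\cup\{y_d\}$ lie in $B(i,d)$'', you are implicitly using that $\tau$ avoids both $x_d$ and $y_d$; this is automatic (if, say, $y_d\in\tau$ then $\tau\cup\{x_d\}\notin\partial C_d^*$), but a one-line remark would make the write-up cleaner.
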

\noindent Below we describe the inductive method:

\noindent {\textbf{Step 1:}} Let $1\leq i<\frac{d}{2}$. Let $A_1, A_2, B_1, B_2$ be combinatorial $(d-1)$-balls in a combinatorial $(d-1)$-sphere such that $B_1\cap B_2=A_1\cup A_2$. Furthermore let $$D(i-1, d)=(A_1*\{u\})\cup (A_2* \{v\}),\quad D(i,d)=(B_1*\{u\})\cup (B_2*\{v\}).$$ In addition we have that 
	\begin{enumerate}
		\item $A_1\cap A_2$ and $B_1\cap B_2$ are combinatorial $(d-1)$-manifolds with boundary having the same homology as $\Sp^{i-2}$ and $\Sp^{i-1}$ respectively.
		\item $\partial (A_1\cap A_2)$ and $\partial (B_1\cap B_2)$ are combinatorial $(d-2)$-manifolds having the same homology as $\Sp^{i-2}\times \Sp^{d-i}$ and $\Sp^{i-1}\times \Sp^{d-i-1}$ respectively.
	\end{enumerate}

\noindent {\textbf{Output:}} By Corollary \ref{cor: inductive method}, $D(i-1,d)$ and $D(i,d)$ are combinatorial $d$-manifolds that have the same homology as $\Sp^{i-1}$ and $\Sp^{i}$ respectively. We also generate two combinatorial $(d-1)$-manifolds $\partial D(i-1, d)$ and $\partial D(i,d)$ that triangulate $\Sp^{i-1}\times \Sp^{d-i}$ and $\Sp^{i}\times \Sp^{d-i-1}$ respectively.

\noindent {\textbf{Step 2:}} Let $C_1=(B_1*\{u\})\cup (A_2*\{v\})$ and $C_2=(A_1*\{u\})\cup (B_2*\{v\})$ and furthermore, $D(i, d+1)=(C_1*\{u'\})\cup (C_2*\{v'\}).$ 

\noindent {\textbf{Claim}:} $C_1$ and $C_2$ are combinatorial $d$-balls and $D(i,d+1)$ is a combinatorial $(d+1)$-manifold.

\begin{proof}
	The intersection of two combinatorial $d$-balls $B_1*\{u\}$ and $A_2*\{v\}$ is $B_1\cap A_2=A_2$, which is a combinatorial $(d-1)$-ball contained in their boundaries. Hence $C_1$ is also a combinatorial $d$-ball. Similarly, $C_2$ is also a combinatorial $d$-ball. Since $B_1\cap B_2=A_1\cup A_2$, we have that
	\[C_1\cap C_2=(A_1*\{u\})\cup (A_1\cap A_2)\cup (B_1\cap B_2)\cup (A_2*\{v\})=D(i-1,d),\]
	which is a combinatorial $d$-manifold.
	Hence by Lemma \ref{lm: union of comb manifolds} indeed $D(i,d+1)$ is a combinatorial $(d+1)$-manifold.
\end{proof}

\noindent {\textbf{Output:}} By Corollary \ref{cor: inductive method}, $D(i,d+1)$ is a combinatorial $(d+1)$-manifold having the same homology as $\Sp^i$, furthermore $\partial D(i,d+1)$ triangulates $\Sp^i \times \Sp^{d-i}$.
\begin{remark}
	If $(A_1, A_2)$ and $(B_1, B_2)$ form antipodal subcomplexes in a centrally symmetric sphere, then all $D(j,d)$ (for $j=i-1, i, i+1$) and their boundary complexes are centrally symmetric.
\end{remark}
\begin{example}
	We discuss a trivial application in the case $i=1$. Let $B_1$ and $B_2$ be combinatorial $(d-1)$-balls such that $B_1\cap B_2$ consists of two disjoint facets $A_1, A_2$ (which has $d$ vertices respectively). Furthermore $B_1\cup B_2$ triangulates $\Sp^1\times \mathbb{D}^{d-2}$. The above method generate $D(1, d+1)$ whose boundary triangulates $\Sp^1\times \Sp^{d-2}$. Any triangulation of $\Sp^1\times \Sp^{d-2}$ constructed by this method has at least $2d+2$ vertices.
\end{example}
\begin{example}
	A nontrivial application is that we could obtain $B(i,d)$ from the above inductive method. Indeed, 
	 $$B(j, d)=\Big(\lk(x_{d}, B(j, d))* \{x_{d}\}\Big)  \cup \Big( \lk(y_{d}, B(j, d))*\{y_{d}\}\Big)  \quad \mathrm{for} \;\;j=i-1,i, \;\;\mathrm{and}$$ 
    $$\lk(x_{d}, B(i-1, d))\cup \lk(y_{d}, B(i-1, d))=\lk(x_{d}, B(i, d))\cap \lk(y_{d}, B(i, d))=B(i-1,d-1).$$

    Since $\lk(x_{d+1}, B(i, d+1))=\st(x_d, B(i,d))\cup \st(y_d, B(i-1,d))$, the inductive method reconstructs the complex $B(i, d+1)$ whose boundary triangulates $\Sp^i\times \Sp^{d-i-1}$.
\end{example}
\begin{problem}
	Find a triangulation of $\Sp^i\times \Sp^j$ with less than $2i+2j+4$ vertices for $j\geq i\geq 2$.
\end{problem}

\section{A balanced triangulation of $\Sp^2\times \Sp^{d-3}$}
In this section, we present our main construction for a balanced triangulation of $\Sp^2 \times \Sp^{d-3}$.  The geometric intuition of our construction comes from handle theory. For $d\geq 3$ the sphere $\Sp^{d-1}$ admits the following decomposition:

\[ \Sp^{d-1} = \partial\mathbb{D}^d = (\partial \mathbb{D}^2 \times \mathbb{D}^{d-2}) \cup (\mathbb{D}^2 \times \partial \mathbb{D}^{d-2}) = (\Sp^1 \times \mathbb{D}^{d-2}) \cup (\mathbb{D}^2 \times \Sp^{d-3}). \]

Let $S$ be a triangulated $(d-1)$-sphere that has the decomposition $S=B_1\cup_{\partial B_1=\partial B_2} B_2$ such that $\|B_1\|\cong \Sp^1\times \mathbb{D}^{d-2}$, $\|B_2\|\cong \mathbb{D}^2\times \Sp^{d-3}$, and $\|\partial B_1\| = \|\partial B_2\|\cong \Sp^1 \times \Sp^{d-3}$. From $S$ we can form a triangulation of $\Sp^2\times \Sp^{d-2}$ in the following way: take two copies of $B_2$ and denote them as $B_2$ and $B_2'$. If $\partial B_2$ is an \emph{induced} subcomplex in $B_2$, then we glue $B_2$ and $B_2'$ along their boundaries. The resulting complex is homeomorphic to $\Sp^2\times \Sp^{d-3}$. However, if $\partial B_2$ is not an induced subcomplex of $B_2$, then usually we cannot glue $B_2$ and $B_2'$ by identifying their boundaries directly to obtain a triangulated manifold. Instead, one needs to construct a simplicial complex $N$ such that $\|N\|\cong \|\partial B_2\|\times \mathbb{D}^1$ and $\partial N=\partial B_2\cup\partial B_2'$. (Geometrically, $\|N\|$ serves as a tubular neighborhood of both $\|\partial B_2\|$ and $\|\partial B_2'\|$.) Then the complex $B_2\cup N\cup B_2'$ is a triangulation of $\Sp^2\times \Sp^{d-3}$.

Our approach of constructing a balanced triangulation of $\Sp^2\times \Sp^{d-3}$ is by finding suitable balanced candidates of $B_2$ and $N$ as described above. We begin with defining a variation of the usual connected sum.
\begin{definition}\label{Definition: connected sum}
	Consider $(\Gamma_1, \sigma_1)$ and $(\Gamma_2, \sigma_2)$, where $\Gamma_1$ and $\Gamma_2$ are boundary complexes of two $d$-cross-polytopes defined on disjoint vertex sets, and $\sigma_1, \sigma_2$ are facets of $\Gamma_1, \Gamma_2$ respectively. For $i=1,2$, let $\kappa_i$ be the coloring map on $\Gamma_i$. If $e_i$ is an edge in $\Gamma_i$ but not in $\pm \sigma_i$ and $\kappa_1(e_1)=\kappa_2(e_2)$, then the $\Diamond$-connected sum $(\Gamma_1\#\Gamma_2,\sigma_1\#\sigma_2)$ is obtained by deleting $e_i$ from $\Gamma_i$, and gluing $\overline{\Gamma_1\backslash \st(e_1, \Gamma_1)}$ with $\overline{\Gamma_2\backslash \st(e_2,\Gamma_2)}$ along their boundaries in such a way that  $\st(e_1)[V(\sigma_1)]$ is identified with $\st(e_2)[V(\sigma_2)]$, and $\st(e_1)[V(-\sigma_1)]$ is identified with $\st(e_2)[V(-\sigma_2)]$. The new coloring map is given by $\kappa: V\to [d]$, $\kappa(v):=\kappa_i(v)$ if $v\in \Gamma_i$ for $i=1,2$.
\end{definition}
\begin{figure}[h]
	\centering
	\subfloat[$\Gamma_1$ and $\Gamma_2$; here $\sigma_1=\{y_1,y_2,y_3\}$ and $\sigma_2=\{x_1,Y_2,Y_3\}$]{\includegraphics{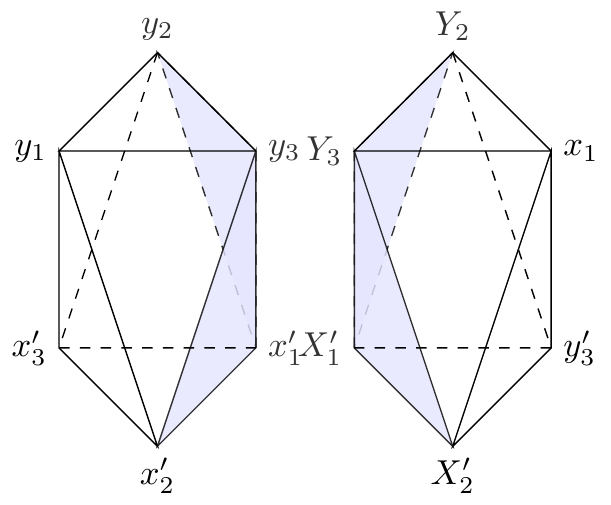}}
	\hspace{0.8cm}
	\subfloat[$(\Gamma_1\#\Gamma_2,\sigma_1\#\sigma_2)$]{\includegraphics[scale=1.2]{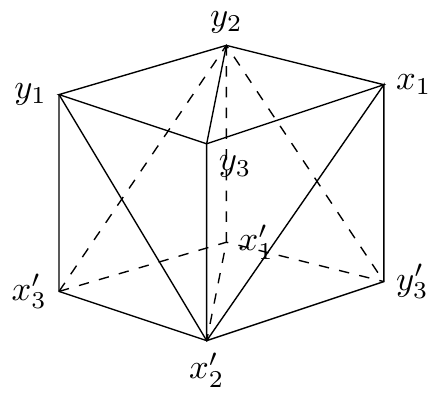}}
	\caption{The $\Diamond$-connected sum $(\Gamma_1\#\Gamma_2,\sigma_1\#\sigma_2)$. First we delete the edge $\{y_3,x_1'\}$ in $\Gamma_1$ and $\{Y_3, X_1'\}$ in $\Gamma_2$, then glue $\Gamma_1$ and $\Gamma_2$ along the 4-cycles $(y_3,x_2',x_1',y_2)$ and $(Y_3, X_2', X_1', Y_2)$.}
\end{figure}
The following properties of the $\Diamond$-connected sum justify the notation $(\Gamma_1\#\Gamma_2,\sigma_1\#\sigma_2)$ in the definition. 
\begin{lemma} Let $\Gamma_1$ and $\Gamma_2$ be the boundary complexes of two $d$-cross-polytopes with pairs of antipodal facets $\pm \sigma_1, \pm \sigma_2$ respectively. Then $(\Gamma_1\#\Gamma_2,\sigma_1\#\sigma_2)$ satisfies the following properties:
	\begin{enumerate}
		\item The complex is a balanced triangulation of $\Sp^{d-1}$.
		\item The restriction of $(\Gamma_1\#\Gamma_2,\sigma_1\#\sigma_2)$ to $V(\sigma_1)\cup V(\sigma_2)$ is the usual connected sum of simplices $\sigma_1\# \sigma_2$.
		\item The link of every edge $e=\{x_i,y_j\}$ in $(\Gamma_1\#\Gamma_2,\sigma_1\#\sigma_2)$ is the boundary complex of a $(d-2)$-cross-polytope.
	\end{enumerate}	
\end{lemma}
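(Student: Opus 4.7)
The plan is to establish the three properties in sequence, leveraging the local cross-polytope structure at each edge.

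For property (1), I would first observe that $\st(e_i,\Gamma_i)=e_i*\lk(e_i,\Gamma_i)$ is the join of a $1$-simplex with the $(d-3)$-sphere $\partial C_{d-2}^*$ (the link of any edge in a cross-polytope boundary), and hence is a combinatorial $(d-1)$-ball whose boundary is the $(d-2)$-sphere $\partial e_i*\lk(e_i,\Gamma_i)\cong\partial C_{d-1}^*$. Consequently $\overline{\Gamma_i\setminus\st(e_i,\Gamma_i)}$ is also a combinatorial $(d-1)$-ball with the same boundary sphere, and gluing the two such balls along their common $(d-2)$-sphere boundary via the prescribed simplicial isomorphism yields a combinatorial $(d-1)$-sphere by Lemma~\ref{lm: union of comb manifolds}. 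The combined coloring $\kappa$ is a valid $d$-coloring because the identifications $\st(e_1)[V(\sigma_1)]\leftrightarrow\st(e_2)[V(\sigma_2)]$ and $\st(e_1)[V(-\sigma_1)]\leftrightarrow\st(e_2)[V(-\sigma_2)]$ are color-preserving: this uses the hypothesis $\kappa_1(e_1)=\kappa_2(e_2)$ together with the fact that antipodal vertices in $\partial C_d^*$ share a color.

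For property (2), I would note that the induced subcomplex $\Gamma_i[V(\sigma_i)]$ is exactly the simplex $\sigma_i$, since $V(\sigma_i)$ contains one vertex per antipodal pair of $\Gamma_i$ and the unique face on such a set is $\sigma_i$. Writing $e_1=\{x_i,y_j\}$, the $\Diamond$-identification glues the $(d-2)$-face $V(\sigma_1)\cap V(\partial\st(e_1))=V(\sigma_1)\setminus\{x_j\}$ of $\sigma_1$ to the analogous $(d-2)$-face of $\sigma_2$; since the deleted edge $e_1$ is not contained in $V(\sigma_1)$, the simplex $\sigma_1$ is unaffected by the deletion and gluing (similarly for $\sigma_2$). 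Hence the restriction of the $\Diamond$-sum to $V(\sigma_1)\cup V(\sigma_2)$ is precisely $\sigma_1\cup\sigma_2$ sharing a common $(d-2)$-facet, which is by definition the connected sum $\sigma_1\#\sigma_2$.

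For property (3), the plan is a case analysis on an edge $e=\{x_i,y_j\}$ according to its position relative to the identified boundary $\partial\st(e_1)=\partial\st(e_2)$. In the easy case, when $e$ has a vertex lying outside this boundary---say $e\subseteq\Gamma_1$ with a vertex in the pair $\{y_i,x_j\}$ antipodal to an endpoint of $e_1$---then $e\cup e_1$ contains an antipodal pair and is not a face of $\Gamma_1$, so no facet of $\Gamma_1$ through $e$ is deleted in the $\Diamond$-sum and $\lk(e,\Gamma_1\#\Gamma_2)=\lk(e,\Gamma_1)\cong\partial C_{d-2}^*$. The main obstacle is the case where $e$ lies entirely on the identified boundary, because then facets from both $\Gamma_1$ and $\Gamma_2$ contribute to $\lk(e)$; here I would carefully track which facets through $e$ in each $\Gamma_i$ also contain $e_i$ (and are therefore removed), and then verify that the color-preserving identification of $\partial\st(e_1)$ with $\partial\st(e_2)$ assembles the surviving contributions from the two sides into a single copy of $\partial C_{d-2}^*$ rather than a larger sphere.
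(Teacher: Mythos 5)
Your treatment of parts (1) and (2) is correct, and in fact more detailed than the paper's (the paper dismisses (1) as ``clear from the construction''). Your argument for part (2) --- that $\Gamma_i[V(\sigma_i)]=\sigma_i$, that the identified $(d-2)$-face of $\sigma_1$ is $\sigma_1\setminus\{x_j\}$ (the vertex whose antipode is the $y$-endpoint of $e_1$), and that $\sigma_1$ itself survives because $e_1\not\subseteq V(\sigma_1)$ --- is exactly the paper's reasoning. For part (3), your Case~1 (an endpoint of $e$ lies outside $V(\partial\st(e_1))$, so $e\cup e_1$ contains an antipodal pair and no facet through $e$ is removed or glued) is also correct and matches the paper.

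The gap is in your Case~2, which is the core of part (3) and which you only outline. Saying you would ``carefully track which facets are removed'' and ``verify the surviving contributions assemble into $\partial C_{d-2}^*$'' restates the goal without supplying the mechanism. What is missing is the key structural observation: when $e$ lies in the identified boundary $\partial\st(e_1)$, the edges $e$ and $e_1$ share exactly one vertex (say $e=\{x_1,y_2\}$ and $e_1=\{x_1,y_3\}$), and then
\[
\lk\bigl(e,\st(e_1,\Gamma_1)\bigr)=\{y_3\}*\Sigma,
\]
where $\Sigma$ is the boundary of the cross-polytope on the remaining $d-3$ antipodal pairs. So the part of $\lk(e,\Gamma_1)$ removed is a \emph{cone} over $\Sigma$, and what survives is the opposite cone $\{x_3\}*\Sigma$. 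By the same computation on the $\Gamma_2$ side, what survives there is another cone $\{v\}*\Sigma$ over the \emph{same} $\Sigma$ (after identification of the boundary stars). Gluing these two cones along $\Sigma$ produces the suspension $\{x_3,v\}*\Sigma$, which is the boundary of a $(d-2)$-cross-polytope. Without recognizing that the removed and surviving pieces are cones over a common $(d-4)$-cross-polytope boundary, the ``verify it assembles correctly'' step has no content, and one cannot rule out a priori that the result is a larger sphere or fails to be cross-polytopal. You should also note in Case~2 that $e\subseteq V(\partial\st(e_1))$ and $e\neq e_1$ forces $e$ and $e_1$ to share exactly one vertex (two disjoint edges in $\st(e_1)$ not containing $e_1$ would either meet an antipode of $e_1$ or be $e_1$ itself), which is what makes the cone structure appear.

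One minor imprecision in part (1): Lemma~\ref{lm: union of comb manifolds} as stated gives a PL type conclusion only when $\Delta_1\cap\Delta_2$ is a ball, whereas here the two balls $\overline{\Gamma_i\setminus\st(e_i,\Gamma_i)}$ intersect in their entire $(d-2)$-sphere boundary. You should instead invoke the standard PL fact that two combinatorial $n$-balls glued along their whole boundary spheres form a combinatorial $n$-sphere, rather than that lemma.
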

\begin{proof}
	 Assume that $\sigma_1=\{x_1,\dots,x_d\}, \;-\sigma_1=\{y_1,\dots,y_d\}$, and $\sigma_2=\{x_{d+1},\dots,x_{2d}\}, -\sigma_2=\{y_{d+1},\dots,y_{2d}\}$. Part 1 is clear from the construction. For part 2, assume that $e_1=\{x_i, y_j\}, e_2=\{x_k, y_l\}$ are the edges in $\Gamma_1, \Gamma_2$ deleted to form $(\Gamma_1\#\Gamma_2,\sigma_1\#\sigma_2)$. The link $\lk(e_1,\Gamma_1)$ is the boundary of a $(d-2)$-cross-polytope containing the antipodal facets $\sigma_1\backslash\{x_i,x_j\}$ and $(-\sigma_1)\backslash\{y_i,y_j\}$. Similarly, the link $\lk(e_2,\Gamma_2)$ has the antipodal facets $\sigma_2\backslash\{x_k,x_l\}$ and $(-\sigma_2)\backslash\{y_k,y_l\}$. Hence the restriction of $(\Gamma_1\#\Gamma_2,\sigma_1\#\sigma_2)$ to $V(\sigma_1)\cup V(\sigma_2)$ is obtained by taking the union of $\sigma_1$ and $\sigma_2$ and identifying $\sigma_1\backslash\{x_j\}$ with $\sigma_2\backslash\{x_l\}$. In this manner, we get the connected sum $\sigma_1\#\sigma_2$.
	
	For part 3, let $\Delta$ denote the boundary complex of the edge star on which $\Gamma_1$ and $\Gamma_2$ are glued together. If $e\notin \Delta$, there is nothing to prove. Otherwise, assume without loss of generality that $e=\{x_1,y_2\}$ and the edge $e'=\{x_1, y_3\}$ is deleted from $\Gamma_1$ to form $(\Gamma_1\#\Gamma_2,\sigma_1\#\sigma_2)$. Then, $\lk(e, \st(e',\Gamma_1))=\{y_3\}*\Sigma$, where $\Sigma$ is the boundary of the cross-polytope on vertices $\{x_4,y_4,\dots,x_d,y_d\}$. Hence, by construction, the link of $e$ in $(\Gamma_1\#\Gamma_2,\sigma_1\#\sigma_2)$ must be the suspension of $\Sigma$, i.e., the boundary of a $(d-2)$-cross-polytope.
\end{proof}

The above properties ensure that it is possible to take the  $\Diamond$-connected sum inductively. To form $(\Gamma_1\#\dots \#\Gamma_k, \sigma_1\#\dots \#\sigma_k)$ from $(\Gamma_1\#\dots \#\Gamma_{k-1}, \sigma_1\#\dots \#\sigma_{k-1})$ and $(\Gamma_k,\sigma_k)$, we take an edge $e_1\in (\Gamma_1\#\dots \#\Gamma_{k-1}, \sigma_1\#\dots \#\sigma_{k-1})$ but not in $\pm (\sigma_1\#\dots \sigma_{k-1})$, then take an edge $e_2\in \Gamma_k\backslash \pm\sigma_k$ so that $e_1$ and $e_2$ have the same colors, and then take $\diamond$-connected sum as in Definition \ref{Definition: connected sum}.

Recall that if $\Gamma$ is a pure simplicial complex and furthermore there exist two facets $F$ and $F'$ on $\Gamma$ and a map $\phi: F\to F'$ such that for every $v \in F$, $v$ and $\phi(v)$ do not have a common neighbor, then we can remove $F, F'$ and identify $\partial F$ with $\partial F'$ to obtain a new complex $\Gamma^\phi$. This is called a \emph{handle addition}. Similarly, assume that there are two edges $e_1$ and $e_2$ of the same color in $(\Gamma_1\#\dots \#\Gamma_k, A)$ but not in $\pm A$, where $A:=\sigma_1\#\sigma_2\dots\#\sigma_k$. Note that the boundary of $\st(e_i)$ is cross-polytopal with antipodal facets $\st(e_i)[V(A)]$ and $\st(e_i)[V(-A)]$ for $i=1,2$. If the identification maps 
\[\phi: \st(e_1)[V(A)] \to \st(e_2)[V(A)] \quad \mathrm{and}\quad \phi': \st(e_1)[V(-A)] \to \st(e_2)[V(-A)]\] 
are well-defined, then the maps $\phi$ and $\phi'$ naturally extend to a map \[\bar{\phi}: \st(e_1)\to \st(e_2),\] if for every $v\in \st(e_1)$ the vertices $v$ and $\phi(v)$(or $\phi'(v)$) do not have a common neighbor. In this way we obtain a balanced simplicial complex $((\Gamma_1\#\Gamma_2\dots \#\Gamma_k)^{\bar{\phi}}, A^\phi)$ by removing $e_1,e_2$ and identifying $\lk(e_1)$ with $\bar{\phi}(\lk(e_1))=\lk(e_2)$. We call this the $\Diamond$-handle addition. Note that as long as the handle addition is well-defined, \[f_0((\Gamma_1\#\Gamma_2\dots \#\Gamma_k)^{\bar{\phi}})=2f_0(A^\phi)=2k.\]
\subsection{Main construction}
We are now ready to construct a balanced triangulation of $\Sp^2\times \Sp^{d-3}$ with $4d$ vertices. We will write $\Gamma_1\#\Gamma_2$ to denote the $\Diamond$-connected sum if $\sigma_1$ and $\sigma_2$ are clear from the context. Also, to simplify notation, we will sometimes write $x_1\dots x_m$ to denote the face $\{x_1,\dots,x_m\}$.

%{\color{blue} I rewrite the main construction to make it more readable. Check if there are any holes in the proof!}
\begin{example}\label{main construction}
	Let $d\geq 3$. We use the convention that $x_{d+i}=x_i$ and $y_{d+i}=y_i$. Take two $d$-cross-polytopes $P$ and $P'$. The vertex sets of $P$ and $P'$ are $\{x_1,\dots,x_d,y_1,\dots, y_d\}$ and $\{x'_1,\dots,x'_d,y'_1,\dots, y'_d\}$ respectively. We let $\sigma_i = x_1\dots x_iy_{i+1}\dots y_d$ and $\sigma_{d+i}= y_1\dots y_ix_{i+1}\dots x_d$ for $1 \leq i \leq d$. Then the complex $\Delta_1$ generated by the facets $\sigma_1,\dots, \sigma_{2d}$ is exactly $B(1,d)$. We further partition the boundary of $P$ as $\partial P=\Delta_1 \cup_{\partial\Delta_1} \Delta_2$. By Lemma \ref{lemma: B(i,d) properties}, $\Delta_2\cong B(d-3,d)$ and $\|\Delta_1\cap \Delta_2\|\cong \Sp^1\times \Sp^{d-3}$.
	
	Next, define a simplicial map $f: \partial P\to \partial P'$ induced by the following bijection on the vertex sets:
	\[x_i \mapsto x'_{i+1}, \;y_i \mapsto y'_{i+1} \;\; \mathrm{for}\; 1\leq i\leq d-1;\; x_d \mapsto y'_1, \;y_d \mapsto x'_1. \]
	
	By Lemma \ref{lemma: B(i,d) properties}, the complex $\Delta_1$ admits a vertex-transitive action by the dihedral group $\mathcal{D}_{2d}$ of order $4d$, where a generator is given by the map we have chosen (see the proof of Theorem 1.2 in \cite{Klee-Novik-cs manifolds}). Hence $f$ is a simplicial isomorphism and $f(\Delta_1) \cong B(1,d)$. For each $i$, there is a unique $d$-cross-polytope $\Gamma_i$ containing $\sigma_i$ and $f(\sigma_i)$ as antipodal facets. Next, we check that we can take the $\Diamond$-connected sum of $\Gamma_1\#\dots \#\Gamma_{2d}$ inductively. Without loss of generality, assume that $1 \leq i \leq d$; otherwise, we can relabel by switching $x$ and $y$. Note that for $i\leq d-2$, \[\sigma_i = x_1x_2\dots x_iy_{i+1}y_{i+2}\dots y_d, \;\;\sigma_{i+1} = x_1x_2\dots x_{i+1}y_{i+2}y_{i+3}\dots y_d,\] \[\mathrm{and}\;\; f(\sigma_i) = x'_2x'_3\dots x'_{i+1}y'_{i+2}y'_{i+3}\dots y'_dx'_1, \;\; f(\sigma_{i+1}) = x'_2x'_3\dots x'_{i+2}y'_{i+3}y'_{i+4}\dots y'_dx'_1.\] 
	Hence, $\sigma_i \cap \sigma_{i+1} = x_1x_2\dots x_iy_{i+2}\dots y_d$ and $f(\sigma_i) \cap f(\sigma_{i+1}) = x'_2x'_3\dots x'_{i+1}y'_{i+3}\dots y'_dx'_1$.  The missing indices are $i+1$ and $i+2$ respectively, so we let $e_i=x'_{i+1}y_{i+2}$. It follows that $\Gamma_i \cap \Gamma_{i+1} = \st(e_i,\Gamma_i) =\st(e_i,\Gamma_{i+1})$ and hence the $\Diamond
	$-connected sum is well-defined.  Similarly, $\Gamma_{d-1}\cap \Gamma_{d}=\st(\{x'_d,x_1\}, \Gamma_d )$ and $\Gamma_d\cap \Gamma_{d+1}=\st(\{y_1',x_2\}, \Gamma_d)$. Inductively, we form a complex $\Gamma=((\Gamma_1\#\Gamma_2\dots\#\Gamma_{2d})^{\bar{\phi}}, \Delta_1)$ which contains $\Delta_1$ and $f(\Delta_1)$ as subcomplexes.
	
	We partition $\Gamma$ as $\Gamma=\Delta_1\cup f(\Delta_1)\cup N$, so that $N \cap \Delta_1=\partial \Delta_1$ and $N \cap f(\Delta_1)=\partial f(\Delta_1)$. $N$ is then the tubular neighborhood that we would like to construct. Finally, let $\Sigma=\Delta_2\cup_{\partial \Delta_1} N\cup _{\partial f(\Delta_1)} f(\Delta_2)$. (This is well defined as by Lemma \ref{lemma: B(i,d) properties}, $\partial \Delta_1 \cong \partial \Delta_2$.)
\end{example}

\begin{figure}[h]
	\centering
	\subfloat[$\Delta_1$]{\includegraphics{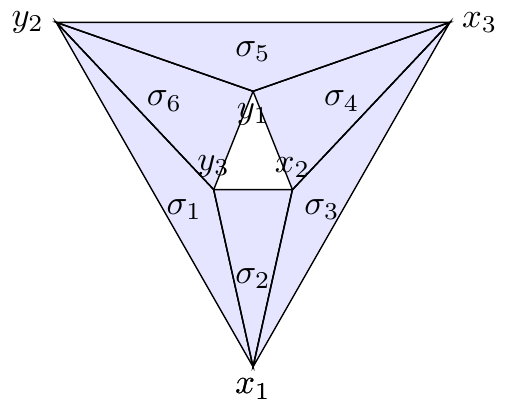}}
	\subfloat[$f(\Delta_1)$]{\includegraphics{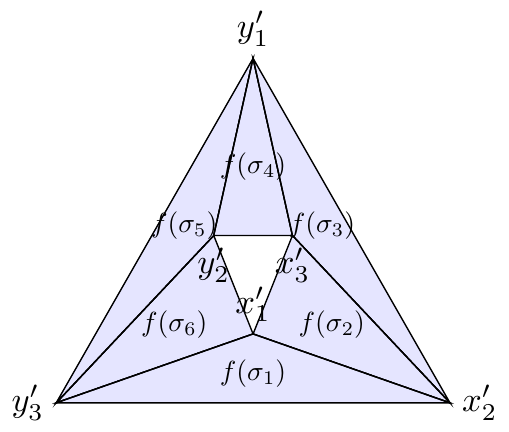}}
	\hspace{0.3cm}
	%\subfloat[$\Gamma$]{\includegraphics[scale=1.5]{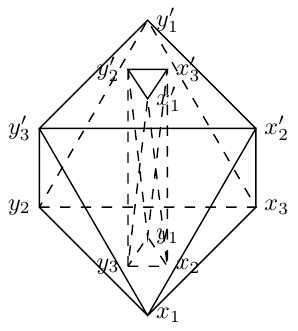}}
	\subfloat[$\Gamma$]{\includegraphics[scale=1.4]{twocrosspolytopes}}
	%\caption{The complexes $\Delta_1$ and $f(\Delta_1)$ when $d=3$: in the case we cannot define $\Gamma_2\#\Gamma_3$, since the pairs of edges $\sigma_2\cap\sigma_3=\{y_3x_1\}$ and $f(\sigma_1)\cap f(\sigma_2)=\{x_1'y_3'\}$ (as colored in green in the picture) share the same colors. Similarly, $\Gamma_5\#\Gamma_6$ is not well-defined. However, in this case the triangulation $\Gamma$ still exist: we need switch the positions of two pairs of vertices $(x_3, y_2)$ and $(x_2,y_3)$, see (c) above. }
	\caption{The complexes $\Delta_1$, $f(\Delta_1)$ and $\Gamma$ when $d = 3$.}
\end{figure}

In the specific case of $d=3$, we have the triangulated manifold homeomorphic to $\Sp^2\times \Sp^0$, which consists of two disjoint spheres. In this case, the construction gives the boundary of two $3$-cross-polytopes, which is indeed a minimal triangulation. When $d=4$, one may check that the construction from above indeed triangulates $\Sp^2\times \Sp^1$. However, it has 16 vertices, so by the results in \cite{Klee-Novik-balanced LBT} and \cite{Zheng}, it is not a minimal triangulation. For $d\geq 5$, we check that $\Sigma$ satisfies all the conditions as described in Theorem \ref{thm: homology of product of spheres}.
\begin{lemma}\label{lm: simply connected}
	The complex $\Sigma$ given in Example \ref{main construction} is simply connected for $d\geq 5$.
\end{lemma}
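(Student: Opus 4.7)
The plan is to apply van Kampen's theorem to the natural decomposition $\Sigma = \Delta_2 \cup N \cup f(\Delta_2)$, in which $\Delta_2$ and $f(\Delta_2)$ are attached to $N$ along $\partial \Delta_1$ and $\partial f(\Delta_1)$ respectively.

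The first step is to show that $\Delta_2$ and $f(\Delta_2)$ are each simply connected when $d \geq 5$. Both are isomorphic to $B(d-3,d)$, and by part (1) of Lemma~\ref{lemma: B(i,d) properties} this complex contains the entire $(d-3)$-skeleton of $\partial C_d^*$. For $d \geq 5$ the inequality $d-3 \geq 2$ guarantees that $B(d-3,d)$ already contains the full $2$-skeleton of $\partial C_d^*$. Since the fundamental group of a CW complex is determined by its $2$-skeleton, this yields
\[
\pi_1(\Delta_2) \;=\; \pi_1\!\left((\partial C_d^*)^{(2)}\right) \;=\; \pi_1(\partial C_d^*) \;=\; \pi_1(\Sp^{d-1}) \;=\; 0,
\]
and likewise $\pi_1(f(\Delta_2)) = 0$.

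The construction of $\Gamma$ is tailored so that $\|N\| \cong \|\partial \Delta_1\| \times \mathbb{D}^1$, i.e., $N$ is a cylinder over $\partial \Delta_1 \cong \Sp^1 \times \Sp^{d-3}$; in particular $N$ is path-connected, and each of the two boundary components $\partial \Delta_1$ and $\partial f(\Delta_1)$ includes into $N$ as a homotopy equivalence. I would then thicken slightly and form an open cover $\Sigma = A \cup B$, where $A$ is a neighborhood of $\Delta_2$ together with the ``left half'' of $N$ (retracting onto $\Delta_2$), $B$ is the symmetric neighborhood of $f(\Delta_2)$ together with the ``right half'' of $N$ (retracting onto $f(\Delta_2)$), and $A \cap B$ deformation retracts onto a central slice homotopy equivalent to $\partial \Delta_1$. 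All three pieces are path-connected (using $d \geq 4$ for $A \cap B$), so van Kampen gives
\[
\pi_1(\Sigma) \;=\; \pi_1(A) *_{\pi_1(A \cap B)} \pi_1(B) \;=\; 0 *_{\mathbb{Z}} 0 \;=\; 0.
\]

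The main obstacle is pinning down the cylinder structure of $N$ precisely enough to justify the retractions above. The simplicial $N$ is assembled out of the local pieces $\Gamma_i \cap \Gamma_{i+1}$ via the iterated $\Diamond$-connected sum and $\Diamond$-handle additions of the main construction, and strictly identifying $\|N\|$ with $\|\partial\Delta_1\|\times \mathbb{D}^1$ requires a careful accounting of the vertex identifications. Luckily the van Kampen argument only needs the weaker facts that $N$ is path-connected and that the two boundary inclusions induce isomorphisms on $\pi_1$, which should be provable piecewise from the explicit description of each handle; alternatively, one can bypass $N$ altogether by working only with $\Sigma^{(2)}$, which contains the full $2$-skeletons of both boundary spheres $\partial P$ and $\partial P'$, and arguing that the generating loop of $\pi_1(\partial \Delta_1)$ is null-homotopic in $\Delta_2 \subset \Sigma$.
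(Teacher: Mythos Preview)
Your argument is correct and follows essentially the same van Kampen strategy as the paper, with one small difference in bookkeeping. The paper sets $A:=\Delta_2\cup N$ and $B:=N\cup f(\Delta_2)$, so that $A$ and $B$ overlap on all of $N$ rather than on a middle slice. With this choice each of $A$ and $B$ deformation retracts onto $\Delta_2$ (resp.\ $f(\Delta_2)$) via the collar $N$, hence is simply connected, and one only needs that $A\cap B$ is \emph{path-connected}; van Kampen then gives $\pi_1(\Sigma)=0$ without ever computing $\pi_1(A\cap B)$ or the induced homomorphisms. Your half-split of $N$ forces you to identify $\pi_1(A\cap B)\cong\mathbb{Z}$ and then kill it in the amalgamated product, which is a little extra work for the same conclusion.

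Your caveat about the cylinder structure of $N$ is well taken, and it applies equally to the paper's proof: the assertion that $A=\Delta_2\cup N$ is simply connected already uses that $\|N\|\cong\|\partial\Delta_1\|\times\mathbb{D}^1$ with $N$ glued to $\Delta_2$ along one boundary component. The paper simply asserts this (and uses it again in the Mayer--Vietoris computation that follows), so you are not missing anything the paper supplies.
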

\begin{proof}
	Since $B(d-3,d)$ contains the 2-skeleton of $\partial C_d^*$, it follows that both $\Delta_2$ and $f(\Delta_2)$ are simply connected. Then the underlying topological spaces of $A:=\Delta_2\cup N$ and $B:=N\cup f(\Delta_2)$ are also simply connected. It is easy to see that both $A\cup B$ and $A\cap B$ are path connected. The result follows since the union of two simply connected open subsets with path-connected intersection is simply connected.
\end{proof}
\begin{theorem}
	There exists a balanced triangulation of $\Sp^2\times \Sp^{d-3}$ with $4d$ vertices for $d\geq 3$.
\end{theorem}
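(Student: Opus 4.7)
First I would dispose of the small cases $d=3,4$ as discussed in the excerpt preceding the theorem, and focus on $d \geq 5$. The plan is to verify that the complex $\Sigma$ from Example \ref{main construction} satisfies all the hypotheses of Theorem \ref{thm: homology of product of spheres}: namely, (i) $\Sigma$ is a balanced combinatorial $(d-1)$-manifold with $4d$ vertices, (ii) $\Sigma$ is simply connected, and (iii) $\Sigma$ has the integral homology of $\Sp^2 \times \Sp^{d-3}$. The vertex count is immediate: each $\Gamma_i$ has vertex set $V(\sigma_i) \cup V(f(\sigma_i)) \subseteq V(P) \cup V(P')$, so the vertex set of $\Sigma \subseteq \Gamma$ is $V(P) \sqcup V(P')$, giving $4d$ vertices. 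Balancedness is inherited from the individual cross-polytopes: each $\Diamond$-connected sum and $\Diamond$-handle addition was set up to identify only vertices of matching colors, so the induced $d$-coloring on $\Gamma$ restricts to a valid coloring of $\Sigma$.

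Next I would check that $\Sigma$ is a closed combinatorial $(d-1)$-manifold via Lemma \ref{lm: union of comb manifolds}. By Lemma \ref{lemma: B(i,d) properties}, each of $\Delta_2 \cong B(d-3,d)$ and $f(\Delta_2)$ is a combinatorial $(d-1)$-manifold with boundary homeomorphic to $\Sp^1 \times \Sp^{d-3}$; by construction $N$ is a combinatorial $(d-1)$-manifold whose boundary is the disjoint union $\partial \Delta_1 \sqcup \partial f(\Delta_1)$. Applying Lemma \ref{lm: union of comb manifolds} successively, first to $A := \Delta_2 \cup N$ and then to $\Sigma = A \cup f(\Delta_2)$, yields a closed combinatorial $(d-1)$-manifold. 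Simple connectedness is exactly Lemma \ref{lm: simply connected}.

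For the homology, I would apply Mayer-Vietoris in two stages. Since the tubular neighborhood $N$ deformation retracts onto $\partial \Delta_1$ (one of its boundary components), the union $A = \Delta_2 \cup N$ is homotopy equivalent to $\Delta_2$, which by Lemma \ref{lemma: B(i,d) properties} has the integral homology of $\Sp^{d-3}$. Applying Mayer-Vietoris to $\Sigma = A \cup f(\Delta_2)$ with intersection $A \cap f(\Delta_2) = \partial f(\Delta_1) \cong \Sp^1 \times \Sp^{d-3}$, the key input is that both inclusion-induced maps on $H_{d-3}$ factor through the projection $\Sp^1 \times \Sp^{d-3} \to \Sp^{d-3}$ and hence are surjective, while the induced maps on $H_1$ and $H_{d-2}$ land in zero groups. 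A routine diagram chase gives $\tilde H_k(\Sigma) = \Z$ for $k \in \{2, d-3, d-1\}$, with $\tilde H_2(\Sigma) = \Z^2$ in the collision case $d = 5$ (where $d-3 = 2$), and zero otherwise, matching $H_*(\Sp^2 \times \Sp^{d-3})$ via K\"unneth. Invoking Theorem \ref{thm: homology of product of spheres} then yields $\Sigma \cong \Sp^2 \times \Sp^{d-3}$.

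The main obstacle I anticipate is twofold. First, verifying that $N$ truly has the homotopy type of a collar $\partial \Delta_1 \times I$ requires a careful facet-by-facet analysis of the sequence of $\Diamond$-connected sums and handle additions building it, and likely an explicit deformation retract onto $\partial \Delta_1$ compatible with both boundary inclusions. Second, Theorem \ref{thm: homology of product of spheres} requires $\Sigma$ to be embedded in $\Sp^d$ as a codimension-1 submanifold; one would need to exhibit such an embedding, for example by realizing $\Sigma$ as the boundary of a regular neighborhood of some $d$-complex inside a PL sphere, or by appealing to an alternative classification result for simply connected $(d-1)$-manifolds with the given homology.
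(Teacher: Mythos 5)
Your proposal follows the same overall strategy as the paper: verify that $\Sigma$ is a simply connected combinatorial $(d-1)$-manifold with the integral homology of $\Sp^2\times\Sp^{d-3}$ and then invoke Theorem \ref{thm: homology of product of spheres}. Your Mayer--Vietoris decomposition differs slightly---you take $\Sigma = (\Delta_2\cup N)\cup f(\Delta_2)$ and use that $\Delta_2\cup N$ deformation retracts onto $\Delta_2$, whereas the paper's proof splits $\Sigma$ as $(\Delta_2\cup f(\Delta_2))\cup N$---but the two are essentially equivalent once $\|N\|\cong\|\partial\Delta_1\|\times[0,1]$ is granted. Your explicit treatment of the collision case $d=5$ (where $d-3=2$ forces $\tilde H_2(\Sigma)=\Z\oplus\Z$) is a welcome refinement: when $d=5$ the two short exact sequences extracted in the paper's proof cannot both begin and end with $0$ as written, since $\tilde H_1$ and $\tilde H_2$ of $\partial\Delta_2\cup\partial f(\Delta_2)$ are both nonzero in that case, so a slightly more careful chase is needed there.

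The two obstacles you raise at the end are genuine and are not discharged in the paper's proof of this theorem. First, the identification $\|N\|\cong\|\partial\Delta_1\|\times[0,1]$ is asserted rather than verified, yet it is precisely what makes the inclusion-induced maps in the Mayer--Vietoris sequence injective/surjective as claimed; one should extract this from the explicit description of $N$ inside the $\Diamond$-connected sum. Second, and more seriously, Theorem \ref{thm: homology of product of spheres} is a statement about codimension-1 submanifolds of $\Sp^d$. In the cs construction of Section 3 this hypothesis holds by design, since $\partial(D_1\cup D_2)\subset\partial C_{d+1}^*$; but the complex $\Sigma$ of Example \ref{main construction} is assembled abstractly via $\Diamond$-connected sums and a $\Diamond$-handle addition, and neither the paper's proof nor your sketch exhibits a combinatorial $d$-sphere containing $\Sigma$. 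To close this, one should either produce such an embedding explicitly (for instance, by realizing $\Sigma$ as the boundary of a regular neighborhood of a suitable complex in a PL $d$-sphere, using that the ambient $\Gamma$ bounds a $1$-handlebody), or appeal to a formulation of Kreck's theorem whose hypotheses replace the ambient sphere by a bounding condition that $\Sigma$ visibly satisfies.
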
                                                               
\begin{proof}
	Our candidate is the complex $\Sigma$ in Example \ref{main construction}. It suffices to check the case $d\geq 5$. By Theorem \ref{thm: homology of product of spheres} and Lemma \ref{lm: simply connected}, it suffices to check that $\Sigma$ has the same homology as $\Sp^2\times \Sp^{d-3}$ for $d\geq 5$. Applying the Mayer-Vietoris sequence on the triple $(\Delta_2\cup f(\Delta_2), N, \Sigma)$,
	\[\cdots \to H_{i+1}(\Sigma)\to H_i(\partial \Delta_2\cup\partial f(\Delta_2))\to H_i(N)\oplus H_i(\Delta_2\cup f(\Delta_2))\to H_i(\Sigma)\to \cdots\]
	for all $i$. $H_i(\partial \Delta_2\cup\partial f(\Delta_2))=\Z\oplus \Z$ for $i=1, d-3, d-2$, and zero otherwise. Also by Lemma \ref{lemma: B(i,d) properties}, $H_i(\Delta_2\cup f(\Delta_2))\cong H_i(\Delta_2)\oplus H_i(\Delta_2)=\Z\oplus \Z$ for $i=d-3$, and zero otherwise. Consider the sequence
	\[0\to H_{d-2}(\Sigma)\to H_{d-3}(\partial \Delta_2\cup\partial f(\Delta_2))\stackrel{i^*}{\rightarrow} H_{d-3}(N)\oplus H_{d-3}(\Delta_2\cup f(\Delta_2))\to H_{d-3}(\Sigma)\to 0.\]
	Since $\|N\|\cong \|\partial \Delta_2\|\times [0,1]$ and the map $i^*$ (induced from the inclusion from $\partial \Delta_2\cup \partial f(\Delta_2)$ to $N$ and $\Delta_1\cup f(\Delta_2)$) is an injection, it simplifies to $0\to \Z\stackrel{i^*}{\rightarrow} \Z\oplus \Z\to H_{d-3}(\Sigma)\to 0$. Hence $H_{d-2}(\Sigma)=0$ and $H_{d-3}(\Sigma)=\Z$. Also
		\[0\to H_{d-1}(\Sigma)\to H_{d-2}(\partial \Delta_2\cup\partial f(\Delta_2))\to H_{d-2}(N)\oplus H_{d-2}(\Delta_2\cup f(\Delta_2))\to 0,\]
	which implies that $H_{d-1}(\Sigma)=\Z$.
	Finally, 
	\[0\to H_2(\Sigma)\to H_1(\partial \Delta_2\cup\partial f(\Delta_2))\stackrel{i'}{\rightarrow}H_1(N)\to H_1(\Sigma)\to 0.\]
	Again, the map $i'$ is surjective. Hence $H_2(\Sigma)=\Z$ and $H_1(\Sigma)=0$. It is clear to see that the other Betti numbers of $\Sigma$ are zero. Finally, the balancedness of $\Sigma$ follows from the construction.
\end{proof}

We list several properties of $\Sigma$. 
\begin{property}
	For $d \geq 4$, let $\Sigma$ be the balanced triangulation of $\Sp^2\times \Sp^{d-3}$ as constructed in Example \ref{main construction}.  Then, $\Sigma$ has the following face numbers.
	\begin{enumerate}
		\item $f_0(\Sigma)=4d$,
		\item $f_1(\Sigma)=4d(2d-3)$,
		\item $f_{d-1}(\Sigma)=(d+2)2^d-8d$.
	\end{enumerate}
\end{property}
\begin{proof}
	The complex $\Sigma$ has $4d$ vertices since $f_0(\Sigma)=f_0(\Delta_2)+f_0(f(\Delta_2))$. By the construction, there are $2d$ edges $e_1=\{x_2', y_3\},\dots, e_{2d}=\{x_1', y_2\}$ deleted from the cross-polytopes $\Gamma_1, \Gamma_2, \dots, \Gamma_{2d}$ to form $\Gamma$. Each $\Gamma_i$ and $\lk(e_i,\Gamma_i)$ are $(d-1)$-dimensional and $(d-3)$-dimensional octahedral spheres respectively, so we have that $f_1(\Gamma_i)=2d(d-1)$ and $f_1(\st(e_i,\Gamma_i))= 2(d-1)(d-2)+1$. Thus,
	\begin{align*}
	f_1(\Gamma_1\#\dots\# \Gamma_{2d})^\phi) &= \sum_{i=1}^{2d}\Big(f_1(\Gamma_i)- f_1(\st(e_i,\Gamma_i))-1\Big) \\
	&=4d^2(d-1)-4d(d-1)(d-2)-4d=4d(2d-3).
	\intertext{It follows from $f_1(\Delta_1)=f_1(\Delta_2)$ that $f_1(\Sigma)=f_1(\Gamma)=4d(2d-3)$. Similarly, since the facets in each $\st(e_i,\Gamma_i)$ are disjoint,}
	f_{d-1}(\Gamma)&=\sum_{i=1}^{2d} \Big(f_{d-1} (\Gamma_i) -2f_{d-1}(\st(e_i,\Gamma_i))\Big)\\&=2d(2^d-2^{d-1})=d2^d.
	\end{align*}
	It follows that $f_{d-1}(\Sigma)=f_{d-1}(\Gamma)-2f_{d-1}(\Delta_1)+2f_{d-1}(\Delta_2)=d2^d-4d+(2^{d+1}-4d)=(d+2)2^d-8d$.
\end{proof}

%{\color{blue}I moved the proof inside the proof for the proposition above, since the proposition makes two claims.  Perhaps it would be better to group the proof with the below proposition - let me know which makes more sense.}

%{\color{blue} I slightly modified your proof. Remember that we use "D" to represent i-dimensional ball before. We need to change that notation later. I think it is OK that we use $\mathcal{D}_d$ for the dihedral group.}
Here is another property concerning the automorphism group of $\Sigma$.

\begin{property}
	For $d \geq 4$, the complex $\Sigma$ admits a group action of $\Z_2\times \mathcal{D}_{2d}$.
	%For $d > 3$, the automorphism group of $\Sigma$ contains $\Z_2 \times \mathcal{D}_{2d}$, where $\mathcal{D}_n$ is the dihedral group of order $2n$.
\end{property}
\begin{proof}
	A simplicial map $g$ on the simplicial complex $\Sigma$ is an isomorphism if it gives a bijection on the facets of $\Sigma$.  A necessary condition for $g$ to be an automorphism is that it sends the missing edges in $\Sigma$ to missing edges in $\Delta$.  Define the following three permutations, modified from the proof of Theorem 1.2(b) in \cite{Klee-Novik-cs manifolds}:
	\begin{itemize}
		\item $D$ maps $x_j$ to $y_j$, $y_j$ to $x_j$, $x_j'$ to $y_j'$ and $y_j'$ to $x_j'$.
		\item $E'$ maps $x_j$ to $x_{d-j+1}'$, $y_j$ to $y_{d-j+1}'$, $x_j'$ to $x_{d-j+1}$ and $y_j'$ to $y_{d-j+1}$.
		\item $R'$ maps $x_d$ to $y_1$, $y_d$ to $x_1$, $x_j$ to $x_{j+1}$, $y_j$ to $y_{j+1}$, and similarly for $x_j'$ and $y_j'$.
	\end{itemize}
	The maps $D$ and $E'$ have order $2$, whereas $R'$ has order $2d$.  Also note that $E'$ is the permutation $E$ from \cite{Klee-Novik-cs manifolds} composed with a switching between the prime and nonprime vertices.  We know that of the edges in $\Sigma$, the only missing edges are edges between antipodal vertices in $\Gamma_i$ and the edges deleted when we join $\Gamma_i$ and $\Gamma_{i+1}$; they are $\{x_i'y_{i+1}\}, \{y_i'x_{i+1}\}$ for $1\leq i\leq d-1$ together with $\{x_d' x_1\}, \{y_d'y_1\}$.  It is straightforward to check that $D$, $E'$, and $R'$ are bijections on the vertices of $\Sigma$, and additionally fix setwise the set of missing edges. Since $E'R' = R'^{-1}E'^{-1}$, $E'$ and $R'$ generate $\mathcal{D}_{2d}$, and since $D$ commutes with both $E'$ and $R'$, we have that the three together generate $\Z_2 \times \mathcal{D}_{2d}$.
	
	By Theorem 1.2(b) of \cite{Klee-Novik-cs manifolds}, we have that facets in $\Delta_1$ and $f(\Delta_1)$, as well as those in $\Delta_2$ and $f(\Delta_2)$, are mapped bijectively by $g=D,E'$ or $R'$. Therefore, it suffices to show that the facets in the tubular neighborhood $N$ are mapped bijectively. Note that any facet $F$ in $N$ must also be contained in some $\Gamma_i$.  Therefore, the only way in which $g(F)$ could not be a facet of $\Sigma$ is if $g(F)$ is in the star of an edge which is deleted. However, as we observed above, $g$ gives bijection on the missing edges of $\Sigma$, i.e., $g(F)\in \st(g(e))$ for some missing edge $e$ if and only if $F\in \st(e)$. Hence $g$ is a bijection on the facets of $\Sigma$, and so $g\in \Aut(\Sigma)$.
	\end{proof}
\begin{remark}
	 We developed a Python/Sage program to produce our $4d$ vertex triangulation of $\Sp^2 \times \Sp^{d-3}$.  In addition, working with Lorenzo Venturello, we create a program to implement cross-flips on balanced simplicial complexes to attempt to reduce the number of vertices of a given triangulation. The program uses a simulated annealing approach, much like the method BISTELLAR uses. However, the complexity of finding shellable subcomplexes in the $d$-cross-polytope grows exponentially with $d$, and so the program is highly inefficient for $d > 4$. In addition, cross-flip sequences connecting two different triangulations (see \cite{Izmestiev-Klee-Novik} and \cite{Kubitzke-Venturello}) tend to be much more delicate and structured, and so simulated annealing works poorly on balanced complexes which cannot be immediately reduced by a cross-flip. So far, we have not found any balanced triangulation of $\Sp^2\times \Sp^{d-3}$ ($d\geq 5$) with less than $4d$ vertices.
\end{remark}
	Klee and Novik \cite{Klee-Novik-cs manifolds} showed that a balanced triangulation of a non-spherical $(d-1)$-manifold requires at least $3d$ vertices. It is not known that apart from the sphere bundle over the circle, if there are other $(d-1)$-manifolds that admit balanced triangulations with $3d$ vertices.  
	\begin{problem}
	Find a small balanced triangulation of $\Sp^i\times \Sp^j$.
	\end{problem}
	\begin{problem}
	Determine the sharp lower bound on the number of vertices required for a balanced triangulation of $\Sp^i\times \Sp^j$.
	\end{problem}

\section{Acknowledgements}
The authors are partially supported by the Mathematics REU program at the University of Michigan during summer 2018. We would like to thank Department of Mathematics at University of Michigan for providing strategic fund. We thank Lorenzo Venturello for supporting the computational aspect of this project. Our thanks also go to the two anonymous referees who provided invaluable comments and pointed out gaps in the proofs in an earlier version of this paper.
\appendix

\end{document}